\documentclass[12pt,reqno]{amsart}
\usepackage{amsmath,amssymb,amsthm,calc,verbatim,enumitem,tikz,url,hyperref,mathrsfs,cite,fullpage,mathtools}
\usepackage{bbm}
\usepackage{setspace}
\usepackage{latexsym}
\usepackage{color}
\usepackage{enumitem}
\usepackage[right]{lineno}
\usepackage{dsfont}
\usepackage{caption}
\usepackage[ruled]{algorithm2e} 	
\SetKwInput{KwInput}{Input}                
\SetKwInput{KwOutput}{Output}              

\usepackage{theoremref}

\makeatletter
\newcommand{\mylabel}[2]{#2\def\@currentlabel{#2}\label{#1}}
\makeatother

\renewcommand\labelenumi{(\arabic{enumi})}
\renewcommand\theenumi\labelenumi

\newcommand{\arrow}{\rightarrow}

\renewcommand{\Pr}{\mathbb{P}}

\newcommand{\mm}{m_2(K_r,C_{\ell})}


\newcommand{\cE}{\ensuremath{\mathcal{E}}}
\newcommand{\cG}{\ensuremath{\mathcal{G}}}
\newcommand{\cF}{\ensuremath{\mathcal{F}}}
\newcommand{\cH}{\ensuremath{\mathcal{H}}}
\newcommand{\cI}{\ensuremath{\mathcal{I}}}
\newcommand{\cJ}{\ensuremath{\mathcal{J}}}

\newcommand{\cR}{\ensuremath{\mathcal{R}}}
\newcommand{\eps}{\ensuremath{\varepsilon}}

\newcommand{\N}{\ensuremath{\mathbb{N}}}

\newcommand{\se}{\ensuremath{\subseteq}}
\newcommand{\sm}{\ensuremath{\setminus}}
\newcommand{\nto}{\ensuremath{\nrightarrow}}
\newcommand{\wt}{\ensuremath{\widetilde}}

\newcommand{\hypertree}{\textsc{Hypertree}} 
\newcommand{\flower}{\textsc{Flower}}
\newcommand{\graph}[1]{\textbf{G}\ensuremath{(#1)}} 	
\newcommand{\Crit}{\ensuremath{\mathrm{Crit}}}
\newcommand{\Out}{\ensuremath{\mathrm{Out}}}

\newcommand{\oldqed}{}
\def\endofFact{\hfill\scalebox{.6}{$\Box$}}
 
\newtheorem{theorem}{Theorem}

\newtheorem{lemma}[theorem]{Lemma}

\newtheorem{obs}[theorem]{Observation}
\newtheorem{claim}[theorem]{Claim}

\newtheorem{definition}[theorem]{Definition}

\theoremstyle{definition}

\newtheoremstyle{case}{}{}{\normalfont}{}{\itshape}{\normalfont:}{ }{}

\theoremstyle{case}

\definecolor{ao}{rgb}{0.0, 0.5, 0.0}

\setlength\marginparwidth{2cm}

\numberwithin{theorem}{section}

\pagestyle{plain}

\begin{document}

\title{Asymmetric Ramsey Properties of Random Graphs for Cliques and Cycles}
\author{Anita Liebenau \and Let\'icia Mattos \and Walner Mendon\c{c}a \and Jozef Skokan}

\address{UNSW Sydney, School of Mathematics and Statistics, Sydney NSW 2052, Australia (A.~Liebenau)}\email{a.liebenau@unsw.edu.au}

\address{IMPA, Estrada Dona Castorina 110, Jardim Bot\^anico, Rio de Janeiro, RJ, Brazil (L.~Mattos and W.~Mendon\c{c}a)} \email{\{\,leticiamat\,|\,walner\,\}@impa.br}

\address{LSE, Department of Mathematics, Houghton Street, London WC2A 2AE, UK (J.~Skokan)}\email{j.skokan@lse.ac.uk}

\thanks{ The first author was supported by an ARC DECRA Fellowship grant DE170100789.
The second and third authors were partially supported by CAPES}

\begin{abstract}
We say that $G \to (F,H)$ if, in every edge colouring $c: E(G) \to \{1,2\}$, we can find either a $1$-coloured copy of $F$ or a $2$-coloured copy of $H$. 
The well-known Kohayakawa--Kreuter conjecture states that the threshold for the property $G(n,p) \to (F,H)$ is equal to $n^{-1/m_{2}(F,H)}$, where $m_{2}(F,H)$ is given by
\begin{align*}
m_{2}(F,H):= \max \left\{\dfrac{e(J)}{v(J)-2+1/m_2(H)} : J \subseteq F, e(J)\ge 1 \right\}.
\end{align*}
In this paper, we show the $0$-statement of the Kohayakawa--Kreuter conjecture for every pair of cycles and cliques.
\end{abstract}

\maketitle

\section{Introduction}

We say that a graph $G$ is a {\it Ramsey graph for the pair of graphs} $(F,H)$ if, in every edge colouring $c: E(G) \to \{1,2\}$, we can find either a $1$-coloured copy of $F$ or a $2$-coloured copy of $H$. 
We write $G \arrow (F,H)$ if $G$ is Ramsey for $(F,H)$, and $G \nto (F,H)$ otherwise.
It follows from Ramsey's Theorem~\cite{Ram} that, for each pair of graphs $(F,H)$, there exists a graph $G$ such that $G\arrow(F,H)$.

The study of whether or not the binomial random graph $G(n,p)$ is Ramsey for a symmetric pair of graphs was initiated by Frankl and R{\"o}dl~\cite{franklrodl}, and {\L}uczak, Ruciński, and Voigt~\cite{luczak1992ramsey}.
They  showed that the probability threshold for having
$G(n,p) \to (K_3,K_3)$ is of order $n^{-1/2}$.
In 1995, R\"odl and Ruci\'{n}ski~\cite{rodl1993lower,rodl1995threshold} determined the probability threshold for $G(n,p) \to (F,F)$ for almost all non-empty graphs $F$.
They showed that, if $F$ has a component which is not a
a star or a path of length three,
then the threshold is of order $n^{-1/m_{2}(F)}$, where 
\begin{align*}
m_{2}(F):=\max\left\{ \frac{e(J)-1}{v(J)-2} : J \subseteq F, v(J) \geq 3 \right \}.
\end{align*}
The parameter $m_{2}(F)$ is called the $m_{2}$-density of the graph $F$. 
Here, $v(J)$ and $e(J)$ denote the size of the vertex set and of the edge set of the graph $J$, respectively.
The remaining cases were addressed subsequently by Friedgut and Krivelevich~\cite{friedgut2000sharp}.

A natural generalisation of this problem is to determine a threshold function $p(F,H)$ for the property $G(n,p)\to (F,H)$, for any asymmetric pair of graphs $(F,H)$.
This problem was posed in 1997 by Kohayakawa and Kreuter~\cite{kohayakawa1997threshold}, who 
proved that $p(C_{\ell},C_{k})=\Theta(n^{1-\ell((\ell-1)k)^{-1}})$ for any pair of cycles $(C_{\ell},C_{k})$ with $k \ge \ell \ge 3$.
In the same paper, they conjectured that $p(F,H)=\Theta(n^{-1/m_{2}(F,H)})$, where
\begin{align*}
m_2(F,H) := \max \left\{\dfrac{e(J)}{v(J)-2+1/m_2(H)} : J \subseteq F, e(J)\ge 1 \right\},
\end{align*}
for any pair of graphs such that $m_{2}(F)\ge m_{2}(H)\ge 1$.
Since the Kohayakawa--Kreuter conjecture was posed, there have been many attempts to solve it (see, for example,~\cite{marciniszyn2009asymmetric,kohayakawa2014upper,gugelmann2017symmetric}).
In a recent breakthrough, Mousset, Nenadov and Samotij~\cite{mousset2018towards} showed that 
$p(F,H)= O(n^{-1/m_{2}(F,H)})$ whenever 
$m_{2}(F)\ge m_{2}(H)\ge 1$,
the so-called \emph{$1$-statement}.
In contrast, much less is known about the \emph{$0$-statement}, 
that is, the statement that $p(F,H)= \Omega(n^{-1/m_{2}(F,H)})$ whenever $m_{2}(F)\ge m_{2}(H)\ge 1$.
One possible reason for that is that the $0$-statement seems to depend on the structural behaviour of Ramsey graphs.

As far as we know, the $0$-statement is only proved for two types of pairs of graphs.
Kohayakawa and Kreuter~\cite{kohayakawa1997threshold} established the $0$-statement for all pairs of cycles while Marciniszyn, Skokan, Sp{\"o}hel and Steger~\cite{marciniszyn2009asymmetric} addressed all pairs of cliques.
{\setstretch{1.116}

In this paper, we show that the $0$-statement holds for any pair of cliques and cycles.
This is the first $0$-statement result for different types of graphs.

\begin{theorem}\thlabel{thm:main}
For all $\ell,r \geq 4$ there exists $c>0$ such that, if $p = p(n)\leq cn^{-1/m_2(K_r,C_{\ell})}$, then
\begin{align*}
\lim \limits_{n \to \infty} \Pr \big[G(n,p)\arrow(K_r,C_\ell)\big] = 0.
\end{align*}
\end{theorem}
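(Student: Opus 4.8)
Write $\mu := \mm$ and fix $p \le c n^{-1/\mu}$ with $c=c(r,\ell)$ small. An edge colouring of a graph $G$ with no $1$-coloured $K_r$ and no $2$-coloured $C_\ell$ is exactly a subgraph $B \se E(G)$ that is $C_\ell$-free and meets $E(Q)$ for every copy $Q$ of $K_r$ in $G$ (colour $B$ with colour $2$, the rest with colour $1$), so it suffices to produce such a $B$ for $G=\gnp$ a.a.s. I would first check that the maximum defining $\mm$ is attained at $J=K_r$, so that
\[
\mu \;=\; \frac{\binom{r}{2}}{\,r-2+1/m_2(C_\ell)\,}.
\]
This is a short computation from $r,\ell\ge 4$ (the map $j\mapsto \binom{j}{2}/(j-2+1/m_2(C_\ell))$ is increasing, and among subgraphs of $K_r$ on a fixed vertex set the clique has the most edges). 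Two features of this value drive everything: on one hand $\mu<m_2(K_r)$, so ``dense'' unions of copies of $K_r$ are a.a.s.\ absent; on the other hand, since $m_2(C_\ell)>1$ we get $\binom{r}{2}/(r-1)<\mu$, so \emph{loose} unions of copies of $K_r$ (glued only along single vertices into cacti and rings, and indeed $K_{r+1}$ itself) have density below $\mu$ and therefore \emph{do} appear a.a.s. Hence the $0$-statement cannot be reduced to forbidding a finite list of subgraphs: the colouring must actually be built. The plan follows the broad scheme of the two known $0$-statements (Kohayakawa--Kreuter for pairs of cycles, Marciniszyn--Skokan--Sp\"ohel--Steger for pairs of cliques), but mixing a clique with a cycle forces a genuinely new structural analysis.

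Second, I would prove a structural description of how the copies of $K_r$ sit in $\gnp$. A first-moment argument gives a.a.s.\ a local sparsity bound $e(H)\le \mu\,(v(H)-2)+c_1$ for every $H\se\gnp$, tight up to the additive constant essentially only for $H=K_r$; in particular $\gnp$ a.a.s.\ has no bounded-size subgraph of density exceeding $m_2(K_r)$. Joining two copies of $K_r$ whenever they share an edge, a tree of $t$ such copies has density tending to $m_2(K_r)>\mu$, so a.a.s.\ each connected component of this graph contains at most a constant $T_0=T_0(r,\ell)$ copies and, being of bounded size and free of dense configurations, is (what I would call) a \flower{}: essentially a bounded tree of copies of $K_r$ glued along edges, with the truly dense cases (many copies through a common edge or a common vertex) excluded. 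The \flower{}s still overlap one another in single vertices; contracting each and reusing the sparsity bound shows the resulting cluster graph is a.a.s.\ loose, with blocks single \flower{}s and only loose single-vertex cycles surviving. I would package this as a \hypertree{} decomposition of the subgraph of $\gnp$ carrying the copies of $K_r$: bounded \flower{} pieces glued at cut vertices, plus a few loose cycles.

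Third, using the \hypertree{} decomposition I would build $B$ piece by piece. On a single \flower{} $W$ — a $2$-connected graph on $O_{r,\ell}(1)$ vertices — one shows there is a \emph{cycle-free} $B_W\se E(W)$ meeting every copy of $K_r$ in $W$ (boundedness makes this a finite problem, but one wants a uniform argument, say an induction on the tree of cliques using that $r\ge 4$ leaves each clique a private vertex, with rigid pieces such as $K_{r+1}$ handled by taking a matching); when $W$ lies on a loose cycle of the \hypertree{} one moreover arranges that $B_W$ does not join, inside $W$, the two vertices through which $W$ enters that loose cycle. Set $B=\bigcup_W B_W$. Every copy of $K_r$ is then met, and a copy of $C_\ell$ in $B$ would be $2$-connected, hence inside a single block of $B$; since \flower{}s meet in at most one vertex (edge-sharing cliques would lie in a common \flower{}) and the loose cycles have been neutralised, such a block lies in one \flower{} $W$, where $B_W$ is cycle-free — a contradiction. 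This produces the colouring, so $\limn \Pr[\gnp\arrow(K_r,C_\ell)]=0$.

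The hard part is the structural step together with the compatibility it must support in the colouring. Because the ``robustly dense'' cliques are glued into honestly sparse, cycle-like arrangements, density alone does not rule out the dangerous configurations, and one must sharpen the \flower{}/\hypertree{} dichotomy far enough — in particular controlling how \flower{}s share vertices with each other and with the ambient loose structure — that the local subgraphs $B_W$ can be chosen simultaneously cycle-free and non-winding along every loose cycle. Making this work uniformly for all $r\ge 4$, and in particular for $r=4$ where a copy of $K_r$ has the fewest vertices to spare, is the step I expect to require the genuinely new ideas.
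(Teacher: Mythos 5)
Your plan --- constructing, a.a.s., a $C_\ell$-free set $B\se E(\gnp)$ that meets every copy of $K_r$ --- is the classical constructive route of Kohayakawa--Kreuter and Marciniszyn--Skokan--Sp\"ohel--Steger, and it is not the route this paper takes; more importantly, the structural dichotomy your colouring rests on has a genuine gap. The sparsity bound $e(H)\le \mm\,(v(H)-2)+O(1)$ is only available for subgraphs of bounded size, and even there it is too weak to give ``blocks are single flowers plus a few loose single-vertex cycles, each flower entering its loose cycle through one pair of vertices''. The obstruction is quantitative: $\lambda(K_r)=r-\binom{r}{2}/\mm=2-(\ell-2)/(\ell-1)=1+1/(\ell-1)>1$, so each single-vertex identification (cost $1$ in $\lambda$) is more than paid for by each clique. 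Concretely, take one copy of $K_r$ and, for \emph{each} of its $\binom{r}{2}$ vertex pairs, attach a chain of $\ell-1$ further copies of $K_r$ glued consecutively at single vertices so as to close a loose cycle of length $\ell$ through that pair. Each handle changes $\lambda$ by $(\ell-1)\lambda(K_r)-\ell=0$, so the whole configuration has $\lambda=\lambda(K_r)>0$, as does every subgraph with an edge, and it appears a.a.s.\ at the stated $p$. Here every flower is a single $K_r$, the central flower lies on $\binom{r}{2}$ short loose cycles whose attachment pairs exhaust its vertex pairs, and your rule ``$B_W$ contains an edge of $W$ but does not join the two vertices through which $W$ enters its loose cycle'' is unsatisfiable at that flower. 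One must instead choose, for each short loose cycle, \emph{some} flower on it at which to cut it, compatibly with the forest condition and with all other loose cycles through that flower; proving that such a global assignment exists a.a.s.\ is precisely the combinatorial core of the constructive approach, and the sketch does not supply it.

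The paper sidesteps all of this by never exhibiting a colouring. From Ramsey minimality it extracts a $\star$-critical witness hypergraph whose underlying graph $F$ satisfies $\lambda(F)\le-\eps$, via a pure degree count (minimum degree at least $r$, the degree-$r$ vertices form an independent set, every vertex has at least $r-2$ neighbours of larger degree). Since $F$ may be arbitrarily large, a first moment over all such $F$ fails; instead an exploration algorithm replaces each witness by one of only $(\log n)^{O(1)}$ possible fingerprints, each either satisfying $\lambda\le-\eps$ or having at least $\log n$ edges with $\lambda\le M$, and a union bound over fingerprints finishes. If you pursue the constructive route, the loose-cycle bookkeeping above is where the new ideas are required.
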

Combining~\thref{thm:main} with the results of~\cite{kohayakawa1997threshold},~\cite{marciniszyn2009asymmetric} and~\cite{mousset2018towards}, we establish the Kohayakawa--Kreuter conjecture for any pair of cycles and cliques with at least $3$ vertices.
We remark that we do need the assumption $\ell,r \ge 4$ in our
proof, so our result does not imply the earlier results
involving $K_{3}$.

The main tool behind the proof of Theorem~\ref{thm:main} is a structural characterisation of Ramsey graphs for the pair $(K_{r},C_{\ell})$ via a `container type' argument (see Theorem~\ref{thm:maintech}), which is a rephrasing of the idea used in previous works. 
Roughly speaking, we find a family $\mathcal{I}$ of graphs with the following properties: 
(a) $|\mathcal{I}|$ is small;
(b) for every graph $G$ with $G \to (K_{r},C_{\ell})$ there exists $I \in \mathcal{I}$ such that $I \subseteq G$; and 
(c) for each $I \in \mathcal{I}$, either $I$ is small and dense or very structured.
We provide the details in Section~\ref{overview}.

The rest of the paper is organised as follows.
In Section 2, we prove~\thref{thm:main}; 
in Section 3, we provide the main technical lemmas of this paper;
in Section 4, we prove some structural lemmas about Ramsey graphs; in Section 5, we describe the algorithms used to prove our main technical theorem (see Theorem~\ref{thm:maintech}); 
finally, in Section 6, we do a careful analysis of these algorithms.
In the appendix, we provide some simple calculations involving $m_{2}$-densities, for completeness.

}

\section{The main technical result }\label{overview}

In this section, we present the main technical result of this paper and deduce~\thref{thm:main} from it. 
In order to state this result, we need a little notation.
For a graph $G$, define $\lambda(G)$ by 
\begin{equation*}
	\lambda(G) =v(G)-\frac{e(G)}{\mm}.
\end{equation*}
For any positive real numbers $M,\varepsilon$ and any positive integer $n$, define
\begin{align}\label{Jsets}
	\mathcal{J}_1(\varepsilon)=\{G: \lambda(G)\le-\varepsilon \} 
	\qquad \text{and} \qquad 
	\mathcal{J}_2(M,n)=\{G: \lambda(G)\le M \text{ and } e(G)\ge
	\log n \},
\end{align}
where the logarithm is in base 2. Finally, for any natural numbers $r,\ell$ and $n$, let 
\begin{align*}
	\mathcal{R}_n(K_r,C_{\ell})=
	\big\{ G : v(G) = n \, \text{ and } \, G \to (K_r,C_\ell) 
	\big\}.
\end{align*}
When $r$ and $\ell$ are clear from context, we write $\cR_n$ for $\mathcal{R}_n(K_r,C_{\ell}).$ 
In addition, we set 
\begin{align*}
	\cR(K_{r},C_{\ell})= 
	\bigcup_{n \in \mathbb{N}} \cR_{n}(K_{r},C_{\ell}).
\end{align*}
The connection between $\lambda$, $\cJ_1(\eps)$, $\cJ_2(M,n)$ and $\cR_n$ is contextualised in the next theorem. 
\begin{theorem}\thlabel{thm:maintech}
	For any integers $r,\ell \ge 4$, 
	there exist positive constants $M=M(r,\ell)$ 
	and $\varepsilon=\varepsilon(r,\ell)$ such that 
	the following holds. 
	For every $n \in \mathbb{N}$, there exists a function 
	$f: \mathcal{R}_n(K_r,C_{\ell}) \to 
	\cJ_1(\varepsilon)\cup \cJ_2(M,n)$ 
	such that $f(G)\subseteq G$ for all $G \in \mathcal{R}_n$ and
	\begin{align*}
		|f(\cR_{n})|\le (\log n)^{M}.
	\end{align*}
\end{theorem}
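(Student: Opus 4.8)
The plan is to build the function $f$ by running a ``stripping'' or ``container-type'' algorithm on each $G \in \cR_n(K_r,C_\ell)$ that extracts a small certificate of the Ramsey property. The guiding idea is the one sketched in the introduction: every Ramsey graph $G$ must contain a ``core'' subgraph that is itself Ramsey in a robust way, and the structure of such cores is constrained enough that (i) there are few of them, and (ii) each is either small and dense or highly structured. Concretely, I would first set up a notion of a \emph{minimal Ramsey witness}: given $G \to (K_r,C_\ell)$, repeatedly delete edges (or vertices) that are not needed for the Ramsey property, or more precisely apply the algorithms described in Section 5 to exhibit inside $G$ a subgraph $I = f(G)$ that still forces the colouring dichotomy. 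The point of using an algorithm rather than merely ``take a minimal Ramsey subgraph'' is that the algorithm's output depends on $G$ only through a bounded amount of data (the choices made at each of boundedly many steps), which is precisely what will give the counting bound $|f(\cR_n)| \le (\log n)^M$.

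Next I would analyse what the algorithm produces. The key structural input is that copies of $K_r$ and $C_\ell$ in a Ramsey configuration must ``overlap'' in a controlled way; this is where the parameter $\lambda(G) = v(G) - e(G)/m_2(K_r,C_\ell)$ enters. One shows, using the structural lemmas of Section 4, that either the extracted witness $I$ is so dense that $\lambda(I) \le -\eps$ (it lies in $\cJ_1(\eps)$), or else it fails to be that dense, in which case the algorithm has detected a large, rigid substructure — essentially a ``flower'' or a long chain of cliques/cycles glued along prescribed vertices — forcing $I$ to have $\lambda(I) \le M$ and $e(I) \ge \log n$, i.e.\ $I \in \cJ_2(M,n)$. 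The split into these two cases should be governed by a threshold on the number of vertices (or edges) of $I$: small witnesses are automatically dense (since a graph on boundedly many vertices that is Ramsey for $(K_r,C_\ell)$ must have many edges, giving negative $\lambda$ after choosing $\eps$ small), while large witnesses are forced by the algorithm to be structured, and the structure caps $\lambda$ from above by some $M$ depending only on $r$ and $\ell$. The constants $M$ and $\eps$ are then extracted from these two finitary/structural facts.

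For the counting bound, I would argue that $f(G)$ is determined by a transcript of length $O(\log n)$ over an alphabet of size $\mathrm{poly}\log n$ — or, in the cleaner formulation, that the number of distinct subgraphs that can arise as outputs is at most the number of ``recipes'' the algorithm can follow, and each recipe is specified by at most $O(\log n / \log\log n)$ choices each from a set of size $\mathrm{poly}\log n$, or some similar bookkeeping, yielding at most $(\log n)^M$ outputs after adjusting $M$. Crucially this must be done without reference to $n$ in the alphabet size in a way that would blow up the bound: the algorithm processes at most $\log n$ ``units'' because once $e(I) \ge \log n$ we stop and land in $\cJ_2$, and each unit contributes a bounded (in $r,\ell$) number of combinatorial choices, at most polylogarithmically many of which involve pointing at a vertex of the current host. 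The main obstacle, I expect, is precisely this simultaneous control: designing the algorithm so that its output is genuinely a subgraph of $G$ (the condition $f(G) \subseteq G$), is always in $\cJ_1(\eps) \cup \cJ_2(M,n)$ (which forces a clean dichotomy between ``dense'' and ``structured'' that has to be proved, not assumed), and is specified by few enough choices that the count comes out as $(\log n)^M$ with $M$ independent of $n$. Balancing the stopping rule ($e(I) \ge \log n$) against the per-step branching so that neither the structure lemma nor the counting lemma breaks is the delicate part; everything else is the routine extremal-graph-theoretic verification that Ramsey graphs for $(K_r,C_\ell)$ with $\ell,r\ge4$ behave as the $m_2$-density heuristic predicts.
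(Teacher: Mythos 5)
Your overall architecture matches the paper's: extract a witness algorithmically, and split the outputs into a ``$\lambda\le-\eps$'' class and a ``large but $\lambda\le M$'' class. The genuine gap is in the counting step. A transcript of length $O(\log n)$ in which \emph{every} step involves even two choices already yields $2^{\log n}=n$ possible outputs, and your alternative bookkeeping --- $O(\log n/\log\log n)$ choices each from a set of size $\mathrm{poly}\log n$ --- yields $\exp\bigl(O(\log n)\bigr)=n^{O(1)}$ outputs. Neither is $(\log n)^{M}$ for a constant $M$, and a polynomial-in-$n$ fingerprint count is fatal: in the deduction of Theorem 1.1 the union bound multiplies $|f(\cR_n)|$ by $n^{-\eps}$, where $\eps$ is a fixed small constant, so only a polylogarithmic count suffices. ``Adjusting $M$'' cannot repair this, since $M$ must not depend on $n$.

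The missing idea is the degenerate/non-degenerate dichotomy that makes almost every step of the algorithm \emph{deterministic}. In the paper, a step is non-degenerate exactly when the attached flower is ``perfect'' (it meets the current graph in a single edge and its petals overlap only consecutively); in that case the seed edge $e_0$ is pinned down by a canonical labelling $\sigma_{i-1}$ of the current graph, so the step contributes no branching at all. Every other (degenerate) step strictly decreases $\lambda$ by a fixed $\delta=\delta(r,\ell)>0$ (Lemma 3.5), while $\lambda$ starts at $\lambda(K_r)$ and stays above $-\eps$ until termination; hence there are at most $O(1)$ degenerate steps, each contributing $(\log n)^{O(1)}$ choices, which is what produces the bound $(\log n)^{M}$. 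Without this mechanism --- a per-step potential decrement confining the number of branching steps to a constant, plus a canonical-labelling device making the remaining steps choice-free --- your stopping rule $e(I)\ge\log n$ and per-step branching cannot be balanced as you hope. Relatedly, your claim that $\lambda(I)\le M$ for large outputs needs a proof that $\lambda$ is non-increasing under every step of the algorithm (the $\beta_{r,\ell}$ computation in Section 6); ``the structure caps $\lambda$'' is not automatic, since attaching a cycle alone \emph{increases} $\lambda$ and must be offset by the petals.
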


In the language of hypergraph containers~\cite{BMS,ST},
~\thref{thm:maintech} provides a relatively small collection 
$f(\cR_{n})$ of \emph{fingerprints}.
Additionally to $|f(\cR_{n})|$ being small, each graph $f(G)$ 
either 
has a \emph{very} small value of $\lambda$ (negative, and bounded 
away from 0), or a fairly small (though possibly positive) value 
of $\lambda$ and is \emph{very} large. 
To obtain such a collection and the function $f$ 
in~\thref{thm:maintech}, we employ an algorithm adapted 
from~\cite{kreuter}. 
{\setstretch{1.09}

\thref{thm:main} is easily deduced from~\thref{thm:maintech}.
The proof of~\thref{thm:maintech} is given in the next four sections.

\begin{proof}[Proof of~\thref{thm:main}] 
	Given $r,\ell\ge 4$, let $M$ and $\varepsilon$ be positive 
	constants given by~\thref{thm:maintech} and set 
	$c=2^{-2M}$. 
	For each $n \in \mathbb{N}$, let $p=p(n) \le c n^{-1/\mm}$. 
	Let $f$ be the function given by~\thref{thm:maintech}, let 
	$\Gamma \sim G(n,p)$ and suppose that 
	$\Gamma\in \mathcal{R}_{n}.$ 
	Then, $f(\Gamma) \subseteq \Gamma$ and 
	$f(\Gamma) \in \cJ_1(\varepsilon)\cup 
	\cJ_2(M,n)$.
	Let $\cI_{1}:=f(\cR_{n})\cap \cJ_{1}(\varepsilon)$ and 
	$\cI_{2}:=f(\cR_{n})\cap \cJ_{2}(M,n)$.
	Thus, 
	\begin{equation} \label{boundmainprob}
		\Pr \big(\Gamma\to (K_r,C_{\ell})\big)\le
		\Pr \big(F \subseteq \Gamma \text{ for some } F \in 
		\cI_{1} \cup \cI_{2}\big).
	\end{equation}
	Since $\lambda(F)\le-\varepsilon$ for each 
	$F \in \mathcal{I}_{1}$ and $c\le 1$, we have
	\begin{equation}\label{boundI1}
		\mathbb{P}(F \subseteq \Gamma)\le 
		n^{v(F)}p^{e(F)} 
		\le c^{e(F)}n^{\lambda(F)} 
		\le n^{-\varepsilon}
	\end{equation}
	for every $F \in \cI_{1}$.
	Similarly, we have
	\begin{equation}\label{boundI2}
		\mathbb{P}(F \subseteq \Gamma)\le c^{e(F)}n^{\lambda(F)} 
		\le  n^{-M}
	\end{equation}
	for every $F \in \mathcal{I}_2,$ 
	as $\lambda(F)\le M$ and $e(F)\ge \log n$ for each 
	$F \in \mathcal{I}_{2}$, and by our choice of $c$.
	Applying the union bound to~\eqref{boundmainprob} and 
	using~\eqref{boundI1} and~\eqref{boundI2}, we obtain that
	\begin{align*}
		\mathbb{P}\big(G(n,p)\to (K_r,C_{\ell})\big)\le 
		(\log n)^{M} \cdot (n^{-\varepsilon}+n^{-M}), 
	\end{align*} 
	since 
	$|\mathcal{I}_{1}\cup \mathcal{I}_{2}|\le (\log n)^{M}$.
	As the expression on the right hand side tends to 0 as 
	$n\to\infty$, this implies the theorem.
\end{proof}

}

\section{Proof of Theorem~\ref{thm:maintech}}\label{proofofmain}

{\setstretch{1.12} In this section, we state the main technical lemmas of this paper, and deduce~\thref{thm:maintech} from them. 
We also introduce some notation that we use during the proof.

Let $r,\ell$ be positive integers. 
We follow the approach by~\cite{kohayakawa1997threshold} and 
\cite{kreuter} and bring our problem into the hypergraph setting. 
Given a graph $G=(V,E)$, let $\cG_{r,\ell}(G)$ be the hypergraph 
on the edge set of $G$ whose hyperedges correspond to the copies of $K_{r}$ and $C_{\ell}$ in $G$.
We suppress $G$, $r$ and $\ell$ from the notation whenever they are clear from context. 
Define
\begin{align}\label{hyperedges}
	\cE_1^{r}(\cG) &= \{ E(F): F \cong K_r, F \subseteq G \} 
	\text{ and } 
	\cE_2^{\ell}(\cG) = \{ E(F): F \cong C_{\ell}, 
	F \subseteq G \}.
\end{align}
Analogously, if $\cH \se \cG_{r,\ell}(G)$, then we set $\cE_{1}(\cH):=\cE_{1}(\cG)\cap E(\cH)$ and $\cE_{2}(\cH):=\cE_{2}(\cG)\cap E(\cH)$.

The reason why we deal with $\cG_{r,\ell}(G)$ instead of $G$ is as follows. 
In order to build our fingerprints algorithmically, 
we would like to deal with a subgraph $H$ of $G$ which has the two following properties: (1) every edge $e \in E(H)$ is contained in an $\ell$-cycle; (2) for {\em every} copy $C$ of $C_{\ell}$ in $H$ end every $e\in C$, there exists a copy $K$ of $K_{r}$ such that $E(K)\cap E(C) =\{e\}$.
These properties would allow us to build the fingerprint of $G$ algorithmically
by attaching either a copy of $K_{r}$ or a copy of $C_{\ell}$ to the current graph at each step.
However, such graph a $H$ might not exist.
Even if $H$ is minimal (with respect to subgraph containment) for $H \to (K_r,C_{\ell})$,
we can only deduce that 
for every $e \in E(H)$ there exist $K \cong K_{r}$ and $C \cong C_{\ell}$ in $H$ such that $E(K)\cap E(C) =\{e\}$.
But this does not directly imply that property (2) holds.
We can overcome this problem by considering subhypergraphs of $\cG_{r,\ell}(G)$ which are $\star$-critical.
This property was first considered in~\cite{kreuter}.

\begin{definition}[$\star$-critical]\thlabel{starcritical} 
Let $\cE_{1}$ and $\cE_{2}$ be two families of sets on a vertex set.
We say that a hypergraph $\cH = \cE_{1} \cup \cE_{2}$ is $\star$-critical with respect to $(\cE_{1},\cE_{2})$ if the following two properties hold.
For each $e \in V(\cH)$, there exists a hyperedge $F \in \cE_{2}$ such that $e \in F$;
and for each $F \in \cE_{2}$ and each $e \in F$, 
there exists a hyperedge $E \in\cE_{1}$ such that $E \cap F = \{e\}$.
When $\cE_{1}$ and $\cE_{2}$ are clear from context, we say that the hypergraph $\cH$ is $\star$-critical.
\end{definition}

Let $\Crit_{r,\ell}(G)$ be the set of all $\star$-critical subhypergraphs of $\cG_{r,\ell}(G)$.
The next lemma shows that if $G \to (K_r,C_{\ell})$, then
there are subhypergraphs of $\cG_{r,\ell}(G)$ which are $\star$-critical.
This is a direct consequence of~\thref{ramsey-crit}, 
and we prove it in Section~\ref{sec:structural}.
\begin{lemma}\thlabel{GRamseyhascritical}
	Let $r, \ell \geq 4$ be integers.
	If $G \to (K_{r},C_{\ell})$, then $\Crit_{r,\ell}(G)\neq 
	\emptyset$.
\end{lemma}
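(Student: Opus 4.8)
The plan is to work entirely inside the hypergraph $\cG := \cG_{r,\ell}(G)$ and to produce a $\star$-critical subhypergraph by passing to an edge-minimal \emph{arrowing} subhypergraph; this is essentially the content of \thref{ramsey-crit}, specialised to copies of $K_r$ and $C_\ell$. Note that, rather than passing to an edge-minimal Ramsey \emph{subgraph} of $G$ (which, as explained above, need not satisfy property (2)), it is cleaner to minimise at the level of hypergraphs. Throughout, I regard a $2$-colouring as a map $c\colon E(G)\to\{1,2\}$, and I say that a subhypergraph $\cH=\cE_1\cup\cE_2$ of $\cG$ (with $\cE_i\se\cE_i(\cG)$) \emph{arrows} $(K_r,C_\ell)$ if every such $c$ leaves some hyperedge of $\cE_1$ monochromatic in colour $1$ or some hyperedge of $\cE_2$ monochromatic in colour $2$; by definition this is exactly what $G\to(K_r,C_\ell)$ asserts about $\cG$ itself. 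An arrowing $\cH$ automatically has $\cE_1,\cE_2\ne\emptyset$, since otherwise the all-$2$ (resp.\ all-$1$) colouring would be a counterexample.

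First I would fix a subhypergraph $\cH=\cE_1\cup\cE_2$ of $\cG$ that arrows $(K_r,C_\ell)$ and has as few hyperedges as possible; this exists because $\cG$ arrows and $G$ is finite. I then claim $\cH$ is $\star$-critical with respect to $(\cE_1,\cE_2)$, which gives $\cH\in\Crit_{r,\ell}(G)$ and hence the lemma. To verify the first defining property, suppose $e\in V(\cH)$ lies in no hyperedge of $\cE_2$; then $e$ lies in some hyperedge of $\cE_1$, and removing from $\cE_1$ all hyperedges containing $e$ yields a strictly smaller subhypergraph that still arrows: given any colouring that is bad for it, recolour $e$ with colour $2$, and observe that a newly created monochromatic-$1$ hyperedge of $\cE_1$ would have to contain $e$ (contradicting $c(e)=2$), while a monochromatic-$2$ hyperedge of $\cE_2$ cannot contain $e$ at all. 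This contradicts minimality.

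The substantive step is the second defining property, and it is the one place a little care is needed. Suppose there are $F_0\in\cE_2$ and $e_0\in F_0$ such that every $E\in\cE_1$ with $e_0\in E$ satisfies $E\cap F_0\ne\{e_0\}$; I claim $\cH':=\cE_1\cup(\cE_2\sm\{F_0\})$ still arrows, again contradicting minimality. Indeed, let $c$ be any colouring that is bad for $\cH'$. Since $\cH$ arrows, $c$ must make some hyperedge monochromatic of the right type, and the only possibility is that $F_0$ is entirely $2$-coloured. Now recolour $e_0$ with colour $1$ to obtain $c'$: no hyperedge of $\cE_2$ is monochromatic-$2$ under $c'$ (such a hyperedge would be monochromatic-$2$ under $c$ too unless it contains $e_0$, but the only monochromatic-$2$ hyperedge under $c$ was $F_0\ni e_0$), so since $\cH$ arrows there is some $E\in\cE_1$ that is monochromatic-$1$ under $c'$; as only $e_0$ changed colour and $E$ was not monochromatic-$1$ under $c$, we get $e_0\in E$, and since every vertex of $F_0$ other than $e_0$ still has colour $2\ne 1$, we conclude $E\cap F_0=\{e_0\}$, contradicting the choice of $F_0,e_0$. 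The main obstacle is getting precisely this colour-flipping argument right; everything else is routine. I note that the argument uses no lower bound on $r$ or $\ell$, the hypothesis $r,\ell\ge 4$ being inherited from the ambient setting.
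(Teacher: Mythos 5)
Your proposal is correct and follows essentially the same route as the paper: the paper passes to a Ramsey-minimal subhypergraph of $\cG_{r,\ell}(G)$ and establishes $\star$-criticality via exactly the colour-flipping argument you describe (this is \thref{ramsey-crit}, whose proof considers a colouring that is bad for $\cH$ minus one hyperedge and then flips the colour of a single hypervertex). The only cosmetic differences are that you minimise over hyperedges only and verify the first $\star$-critical property by a direct deletion argument, whereas the paper proves the symmetric two-directional statement and deduces both properties from it.
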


Given a hypergraph $\cH\se \cG_{r,\ell}(G)$, 
we define the \emph{underlying graph of $\mathcal{H}$},
denoted by $\graph{\mathcal{H}}$, 
to be the subgraph of $G$ whose edge set is 
$\bigcup_{E \in E(\cH)} E$. 
The following lemma is central to our proof. We prove it in Section~\ref{sec:structural}. }

\begin{lemma}\thlabel{dens-critic}
	Let $r, \ell \geq 4$ be integers. There exists 
	$\eps=\eps(r,\ell)>0$ such that the following holds.
	If $\cH \in \Crit_{r,\ell}(H)$,
	then $\lambda\big(\graph{\mathcal{H}}\big)\le -\eps$.
\end{lemma}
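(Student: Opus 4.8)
The plan is to show that any $\star$-critical hypergraph $\cH$ has an underlying graph that is, in a quantitative sense, too sparse to have density above the threshold $\mm$, i.e. $e(\graph{\cH})/v(\graph{\cH})$ is bounded away from $\mm$ by a constant factor depending only on $r,\ell$. Recall $\mm = \max\{e(J)/(v(J)-2+1/m_2(C_\ell)) : J \subseteq K_r\}$; since $K_r$ is the densest pair here, this maximum is attained at $J = K_r$, so $\mm = \binom r2 / (r-2 + (\ell-1)/\ell)$, a fixed rational number strictly bigger than $1$. Thus it suffices to prove $v(G) - e(G)/\mm \ge \eps$ for $G = \graph{\cH}$, and since $\mm > 1$ this will follow once we show $G$ has average degree bounded strictly below $2\mm$.

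First I would exploit the two defining properties of $\star$-criticality to build up $G$ by a careful greedy/peeling argument in the spirit of~\cite{kreuter}. The second property says every edge $e$ of every $\ell$-cycle $C \in \cE_2$ extends to a clique $K \cong K_r$ meeting $C$ exactly in $e$; the first says every vertex of $\cH$ (i.e. every edge of $G$) lies in some $\ell$-cycle. I would order the hyperedges $F_1, F_2, \dots$ of $\cH$ so that $\graph{\cH}$ is exhausted, with each $F_i$ contributing a controlled number of new vertices and new edges relative to what came before; the $\star$-critical structure guarantees that whenever a new $\ell$-cycle is introduced it drags in at least one "private" clique per edge, and whenever a new clique is introduced it must share at least an edge with the existing structure. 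Tracking the worst-case ratio of new edges to new vertices across all such attachment moves gives a uniform upper bound on $e(G)/v(G)$.

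The cleanest way to organise this is probably to argue that $G$ decomposes (or can be built) from copies of $K_r$ and $C_\ell$ glued along single edges, so that $G$ locally looks like the "extremal" configurations already analysed for the $0$-statements in~\cite{kohayakawa1997threshold} and~\cite{marciniszyn2009asymmetric}. For such edge-glued unions one computes $\lambda$ additively: a single $K_r$ has $\lambda(K_r) = r - \binom r2/\mm = (1/\ell - 1) \cdot \binom r2 / \mm < 0$ (using $\mm = \binom r2/(r-2+(\ell-1)/\ell)$), and gluing a further clique or cycle along an edge changes $v$ and $e$ in a fixed ratio that, because of the strict inequality $\mm > m_2(K_r, J')$ for all proper pieces $J'$, can only decrease $\lambda$ or keep it non-positive — never push it above $-\eps$. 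Choosing $\eps = \eps(r,\ell)$ to be, say, half the absolute value of $\lambda(K_r)$ (or the minimum of the relevant finitely many negative rational increments) then works.

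The main obstacle I anticipate is precisely the bookkeeping in the peeling argument: ensuring that when the first property forces every edge into an $\ell$-cycle, the "extra" cycle edges don't accumulate faster than the vertices they introduce. A cycle $C_\ell$ on its own has $\lambda(C_\ell) = \ell - \ell/\mm = \ell(1 - 1/\mm) > 0$ since $\mm > 1$, so cycles are individually *too dense in the wrong direction* — it is only the forced presence of a private $K_r$ per cycle-edge (property two) that saves us, and one must be careful that these private cliques are genuinely distinct enough to contribute their negative $\lambda$-budget. I would handle this by a discharging/amortisation step: assign to each $\ell$-cycle $C$ the cost of its $\ell$ private cliques (at least one genuinely new clique, contributing surplus $|\lambda(K_r)|$ worth of vertices over edges), and show the surplus dominates the deficit $\lambda(C_\ell)$. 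Verifying that the per-cycle clique surpluses can be charged without double-counting — i.e. that $\star$-criticality supplies enough vertex-disjointness or at least edge-disjointness among the attached cliques — is the delicate point, and is exactly where the hypotheses $r \ge 4$ and $\ell \ge 4$ (so that a $K_r$ and a $C_\ell$ sharing a single edge still leave room for the rest) will be used.
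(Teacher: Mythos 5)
Your proposal rests on a sign error that undermines the whole strategy. The framing in your opening paragraph is backwards: $\lambda(G)\le-\eps$ means $e(G)\ge \mm\,(v(G)+\eps)$, i.e.\ the underlying graph is \emph{dense} (edge density $e(G)/v(G)$ strictly above $\mm$), whereas you set out to prove $v(G)-e(G)/\mm\ge\eps$, that it is sparse. More importantly, the quantity your amortised argument is built on is computed incorrectly: $\lambda(K_r)=r-\binom{r}{2}/\mm = 2-1/m_2(C_\ell)=\ell/(\ell-1)>0$, not negative (note also that $1/m_2(C_\ell)=(\ell-2)/(\ell-1)$, not $(\ell-1)/\ell$). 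Since a single clique already has positive $\lambda$, and since the worst-case gluing step --- a cycle meeting the current graph in exactly one edge together with $\ell-1$ cliques each meeting the cycle in exactly one edge and meeting nothing else (the ``perfect flower'') --- changes $\lambda$ by exactly zero, a build-up argument of the kind you describe can only ever conclude $\lambda(G)\le\lambda(K_r)=\ell/(\ell-1)$, which is far from $\le-\eps$. This is not a bookkeeping issue fixable by more careful discharging: the paper's own analysis (\thref{deg-lambda}) shows the per-step increments really are zero in the non-degenerate steps, which is precisely why \thref{thm:maintech} needs the second fingerprint class $\cJ_2$ at all.

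The paper's actual proof is global rather than incremental. It first shows (\thref{degr}) that every vertex of $\graph{\cH}$ has degree at least $r$, and then (\thref{inde-critic}) that the set $A$ of vertices of degree exactly $r$ is independent and that every vertex has at least $r-2$ neighbours of degree greater than $r$. A double count of degrees over the partition $V=A\cup B$ then yields $e(G)/v(G)\ge (r+1)/2-3/(2(r+3))$, which strictly exceeds $m_2(K_r,C_4)\ge\mm$ for all $r\ge4$, and this density gap converts into the required $\lambda(G)\le-\eps$. Any salvage of your local/peeling approach would need such a global structural input about the low-degree vertices; the $\star$-critical property is used exactly there (to forbid adjacent degree-$r$ vertices), not to certify per-step negative increments.
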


In order to find the function 
$f: \mathcal{R}_n(K_r,C_{\ell}) \to \cJ_1(\varepsilon)\cup \cJ_2(M,n)$ in~\thref{thm:maintech}, 
we define an algorithm \hypertree\ in Section~\ref{sec:algos}. 
For each $G \in \cR_n(K_r,C_{\ell})$, this algorithm takes some hypergraph
$\cH \in \Crit_{r,\ell}(G)$ as input and 
creates a pair $(\cH_{T},D_{T})$ as output, 
where $\cH_{T} \subseteq \cH$ and $D_{T} \subseteq \mathbb{N}$.
The fingerprint $f(G)$ will be $\graph{\cH_{T}}$, 
and the auxiliary set $D_{T}$ will help us to count all the possible 
outputs of \hypertree\ and to ensure that $\graph{\cH_{T}}$ belongs to 
$\cJ_{1} \cup \cJ_{2}$.
For the detailed description of \hypertree, we refer the reader to Section~\ref{sec:algos}. 
Let \hypertree$(\cH)$ denote the execution of \hypertree\ on input 
$\cH$.
Its basic properties are given by the next lemma.

\begin{lemma}\thlabel{hypertreeBasics}
	Let $n,r,\ell\ge 4$ be integers and 
	$G$ be a graph on $n$ vertices.
	For any hypergraph $\cH\in\Crit_{r,\ell}(G)$,
	\hypertree$(\cH)$ generates a sequence of hypergraphs 
	$\mathcal{H}_{0}\se \ldots \se \mathcal{H}_{T}\se 
	\mathcal{H}$ and sets $D_0\se\ldots\se D_T$ 
	for which the following holds.
	\begin{enumerate}[label=(\alph*)]
	\item[\mylabel{hypertreeBasics-E0}{$(a)$}]
		$\cH_0=\{E\}$, for some $E\in \cE_{1}(\cH)$; 
		that is, the underlying graph of 
		$\cH_0$ is a copy of $K_r$ in $G$; 
	\item[\mylabel{hypertreeBasics-vertices}{$(b)$}]
		$v(\mathcal{H}_{0})< v(\mathcal{H}_{1})< \ldots 				< v(\mathcal{H}_{T})$; 
	\item[\mylabel{hypertreeBasics-stopping}{$(c)$}]
		$T$ is the smallest integer such that 
		$\lambda(G_T)\le -\eps$ or 
		$T \ge \log n$, 
		where $G_T=\graph{\mathcal{H}_T}$ and 
		$\eps$ is the constant given by~\thref{dens-critic};
	\item[\mylabel{hypertreeBasics-return}{$(d)$}]
		\hypertree$(\cH)$ returns the pair $(\cH_T,D_T)$.
	\end{enumerate}
\end{lemma}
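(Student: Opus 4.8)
\textbf{Proof plan for~\thref{hypertreeBasics}.}
The plan is to prove this lemma essentially by inspecting the formal description of the algorithm \hypertree\ to be given in Section~\ref{sec:algos}, verifying that each stated property is an invariant maintained throughout the execution. Since the lemma only asserts the \emph{basic bookkeeping} behaviour of the algorithm (what it is initialised with, that it grows monotonically, its stopping rule, and what it returns), the proof will be a structured walk through the pseudocode rather than a genuinely combinatorial argument; the real content of correctness — namely that each step can actually be performed, that the output lands in $\cJ_1(\eps)\cup\cJ_2(M,n)$, and that $|f(\cR_n)|$ is small — is deferred to later lemmas and to the analysis in Section~6.

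First I would recall from~\thref{GRamseyhascritical} that for $G\in\cR_n$ the input set $\Crit_{r,\ell}(G)$ is nonempty, so \hypertree$(\cH)$ is well-defined on some $\cH\in\Crit_{r,\ell}(G)$; this is what lets us talk about the execution at all. For~\ref{hypertreeBasics-E0}, I would point to the initialisation step of \hypertree: by the $\star$-critical property (\thref{starcritical}), every vertex of $\cH$ (i.e.\ every edge of $\graph{\cH}$) lies in some hyperedge of $\cE_2(\cH)$, and for each such hyperedge and each of its vertices there is a hyperedge in $\cE_1(\cH)$ meeting it in exactly that vertex; in particular $\cE_1(\cH)\neq\emptyset$, so the algorithm can set $\cH_0=\{E\}$ for a chosen $E\in\cE_1(\cH)$, whose underlying graph is by~\eqref{hyperedges} a copy of $K_r$. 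For~\ref{hypertreeBasics-vertices}, I would examine the generic step: at each step the algorithm attaches a new hyperedge (a copy of $K_r$ or of $C_\ell$) to the current hypergraph in such a way that at least one new vertex of $\cH$ is added to $\graph{\cH_i}$ — this is precisely the point of working inside a $\star$-critical hypergraph, and it should be visible from the definition of the attachment step — giving the strict inequalities $v(\cH_0)<v(\cH_1)<\dots<v(\cH_T)$. Monotonicity $\cH_0\se\cdots\se\cH_T\se\cH$ and $D_0\se\cdots\se D_T$ is then immediate, since hyperedges and elements of $D$ are only ever added, never removed. For~\ref{hypertreeBasics-stopping} and~\ref{hypertreeBasics-return} I would simply read off the \texttt{while}-condition and the \texttt{return} statement of the pseudocode: the loop continues while $\lambda(G_i)>-\eps$ and $i<\log n$, so it halts at the first index $T$ with $\lambda(G_T)\le-\eps$ or $T\ge\log n$, and it outputs $(\cH_T,D_T)$.

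The main obstacle — if there is one — is not in the logical structure of the argument but in making sure the proof is not circular: property~\ref{hypertreeBasics-vertices}, that each step genuinely adds a new vertex, must follow from the \emph{local} combinatorial features of one attachment step (which in turn rely only on $\cH$ being $\star$-critical and on $\cH_i\subsetneq\cH$ whenever the loop has not terminated), and not from any global density statement like~\thref{dens-critic}. So the care needed is to phrase the one-step analysis purely in terms of \thref{starcritical}: when $\graph{\cH_i}$ does not yet contain all of $\graph{\cH}$, there is an edge of $\graph{\cH}$ outside it that lies in some copy of $C_\ell$ (and attached copy of $K_r$) of $\cH$, and attaching the relevant hyperedge brings in at least one new vertex because a single additional edge incident to the current graph cannot be closed up into a new $C_\ell$ or $K_r$ without a new vertex. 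Once that one-step claim is isolated, the rest of~\thref{hypertreeBasics} is a routine induction on the step count together with a direct appeal to the pseudocode, so I would keep the write-up short and defer all quantitative estimates to Section~\ref{sec:algos} and Section~6.
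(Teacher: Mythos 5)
Your overall strategy---reading the four properties off the pseudocode and isolating the one-step claim that each iteration adds a new hypervertex---is the same as the paper's, and parts \ref{hypertreeBasics-E0}, \ref{hypertreeBasics-stopping} and \ref{hypertreeBasics-return} are indeed handled exactly as you describe. But your plan for the one remaining point contains a genuine gap. You insist that the one-step analysis be phrased ``purely in terms of \thref{starcritical}'' and ``not from any global density statement like \thref{dens-critic}'', out of a worry about circularity. This is the wrong instinct, and following it would leave the argument incomplete. The issue is well-definedness of the else-branch: when the condition in line~\ref{cliqueexistence} fails, \hypertree\ calls \flower, and \flower\ must first find a seed edge $e_{0}\in E(\graph{\cH_{i-1}})$ lying in no cycle of $\cE_{2}(\overline{\cH}_{i-1})$ (line~\ref{algF:seed}) before it can find the cycle $C$ with $C\not\se V(\cH_{i-1})$ (line~\ref{algF:cycle}). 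The existence of $e_{0}$ is \emph{not} a consequence of $\cH$ being $\star$-critical alone: the paper's argument (\thref{existenceofe0}) is that if no such $e_{0}$ existed, then $\overline{\cH}_{i-1}$ would itself be $\star$-critical, whence $\lambda(\graph{\cH_{i-1}})\le-\eps$ by \thref{dens-critic}, contradicting the while-condition. There is no circularity here, since \thref{dens-critic} is proved independently of the algorithm; without it, a purely local argument gives you no contradiction and hence no guarantee that the step can be performed at all.

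A smaller point: your justification for part \ref{hypertreeBasics-vertices} (``a single additional edge incident to the current graph cannot be closed up into a new $C_\ell$ or $K_r$ without a new vertex'') is not the mechanism. The strict increase $v(\cH_{i-1})<v(\cH_i)$ is enforced syntactically: the if-branch only admits hyperedges $E$ with $E\not\se V(\cH_{i-1})$, and \flower\ is required to return a cycle $C$ with $C\not\se V(\cH_{i-1})$ (property \ref{CnotincH_i} of \thref{lem:preludeflowerCorrectness}). So once well-definedness of the step is established as above, \ref{hypertreeBasics-vertices} needs no further combinatorics. Note also that the relevant ``vertices'' are hypervertices of $\cH$, i.e.\ edges of $G$, so the count that increases is $e(\graph{\cH_i})$, not the number of graph vertices.
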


Our next lemma is the most important property of the \hypertree\ algorithm (for a proof, see Section~\ref{sec:analysis}).
We shall use it together with~\thref{hypertreeBasics} to 
deduce that the underlying graph given by the output of \hypertree$(\cH)$ belongs to 
$\cJ_{1}(\eps) \cup \cJ_{2}(M,v(G))$ 
whenever $\cH$ is a hypergraph in $\Crit_{r,\ell}(G)$,
where $M=M(r,\ell)>0$.
As the underlying graph of the output hypergraph is 
a subgraph of $G$, this will establish the existence of a function 
$f: \mathcal{R}_n(K_r,C_{\ell}) \to \cJ_{1}(\eps)
\cup \cJ_{2}(M,n)$ such that $f(G)\subseteq G$.

\begin{lemma}\thlabel{deg-lambda} 
	For all integers $r,\ell\ge 4$, there exists 
	$\delta = \delta(r,\ell) >0$ such that the following holds.
	For any graph $G$ and any hypergraph 
	$\cH \in \Crit_{r,\ell}(G)$, the sequence 
	$(\mathcal{H}_i,D_i)_{i=0}^{T}$ generated by 
	\hypertree$(\cH)$ satisfies
	
	\begin{enumerate}
		\item[\mylabel{item:1@deg-lambda}{$(1)$}] 
    	$\lambda (G_{i}) = \lambda (G_{i-1})$ 
		for all $i\not\in D_T$, and 
		
		\item[\mylabel{item:2@deg-lambda}{$(2)$}] 
		$\lambda (G_{i}) \leq 
		\lambda (G_{i-1})- \delta$ for all $i\in D_T$, 
	\end{enumerate}
	where $G_{i}=\graph{\cH_{i}}$ for each 
	$i \in \{0,\ldots,T\}$.
	\end{lemma}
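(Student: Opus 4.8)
The plan is to track how the parameter $\lambda$ changes as the \hypertree\ algorithm grows the hypergraph one hyperedge-set at a time. The algorithm, at each step $i$, passes from $\cH_{i-1}$ to $\cH_i$ by attaching either a copy of $K_r$ or a copy of $C_\ell$ (this is the point of working with $\star$-critical hypergraphs, as explained before \thref{starcritical}), and \thref{hypertreeBasics}\ref{hypertreeBasics-vertices} tells us that each step adds at least one new vertex to the underlying graph. Writing $G_i = \graph{\cH_i}$ and recalling $\lambda(G_i) = v(G_i) - e(G_i)/\mm$, a single step contributes $\Delta v_i := v(G_i) - v(G_{i-1}) \ge 1$ new vertices and $\Delta e_i := e(G_i) - e(G_{i-1}) \ge 0$ new edges to the underlying graph, so $\lambda(G_i) - \lambda(G_{i-1}) = \Delta v_i - \Delta e_i/\mm$. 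The set $D_T$ should be defined (inside the algorithm) precisely as the set of indices $i$ at which this difference is \emph{strictly negative}; for $i \notin D_T$ the difference is forced to be exactly $0$, giving part \ref{item:1@deg-lambda}.

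The two cases to analyse are attaching a $K_r$ and attaching a $C_\ell$. First I would record the extremal situation: if we attach a new copy $K \cong K_r$ that shares exactly one edge (two vertices) with $G_{i-1}$, then $\Delta v_i = r-2$ and $\Delta e_i = \binom r2 - 1$, and one checks from the definition of $\mm = m_2(K_r,C_\ell)$ — via the appendix calculations the paper promises — that $(r-2) - (\binom r2 - 1)/\mm = 0$ exactly; similarly attaching a $C_\ell$ sharing exactly one edge gives $\Delta v_i = \ell - 2$, $\Delta e_i = \ell - 1$, and $(\ell-2) - (\ell-1)/\mm \le 0$, with equality iff $\mm$ is attained on the $K_r$ side in a way that makes $(\ell-2)/(\ell-1) = 1/\mm$. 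The key quantitative point is that in \emph{any} other configuration — sharing more than one edge, or sharing vertices without sharing the ``expected'' edges, or the attached copy degenerating onto existing structure — the ratio $\Delta e_i/\Delta v_i$ strictly exceeds what the extremal case gives, and since everything in sight is bounded in terms of $r$ and $\ell$ only (there are finitely many ways a $K_r$ or a $C_\ell$ can intersect a fixed graph), the deficit $\Delta v_i - \Delta e_i/\mm$ is bounded away from $0$ by some $\delta = \delta(r,\ell) > 0$ whenever it is negative. That uniform gap $\delta$ is exactly what \ref{item:2@deg-lambda} asserts, once we declare $D_T$ to collect precisely the ``lossy'' steps.

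Concretely I would proceed as follows. \textbf{Step 1:} unwind the precise update rule of \hypertree\ from Section~\ref{sec:algos} to see that each new $\cH_i$ is obtained by adding a single hyperedge $F \in \cE_1(\cH) \cup \cE_2(\cH)$ together with possibly some further hyperedges already spanned by the resulting vertex set, and confirm that $D_i$ is updated to include $i$ exactly when $\lambda$ strictly drops; this makes \ref{item:1@deg-lambda} immediate and reduces \ref{item:2@deg-lambda} to a lower bound on the size of a strict drop. \textbf{Step 2:} enumerate the possible intersection patterns of the newly added $K_r$ or $C_\ell$ with $G_{i-1}$ — a finite list depending only on $r,\ell$ — and for each compute $\Delta v_i$ and $\Delta e_i$. \textbf{Step 3:} using the $m_2$-density identities for $m_2(K_r,C_\ell)$ from the appendix (in particular that $\mm = \max\{\binom r2 /(r-2+1/m_2(C_\ell)),\, 2\}$-type expressions, and that $r,\ell \ge 4$ keeps $m_2(C_\ell) = (\ell-1)/(\ell-2)$ and the clique term well-behaved), verify that the extremal ``share-one-edge'' attachments give $\Delta v_i - \Delta e_i/\mm \le 0$ with the $K_r$ case tight, and that every other pattern gives a value that is either $\ge 0$ or $\le -\delta$ for an explicit $\delta(r,\ell)$. \textbf{Step 4:} take $\delta$ to be the minimum over this finite list of the negative values, and conclude. \emph{The main obstacle} is Step 3: it is the place where one actually has to know that $\mm$ behaves the way the Kohayakawa--Kreuter formula predicts and that no ``cheap'' attachment (few vertices, few edges, but $\lambda$ decreasing by only a tiny amount) is possible — in other words, ruling out a sequence of intersection patterns whose deficits accumulate to something positive but each of which is individually a negligible negative number. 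Because $r$ and $\ell$ are fixed, this is ultimately a finite case check on rational numbers, but it is delicate near equality and is exactly why the hypotheses $r,\ell \ge 4$ are needed (for $K_3$ or $C_3$ some of these ratios coincide and the gap $\delta$ degenerates to $0$).
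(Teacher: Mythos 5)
There is a genuine gap, and it sits exactly where you flagged your ``main obstacle'': the sign analysis of the two extremal attachments is wrong, and fixing it requires an idea your proposal does not contain. First, attaching a copy of $K_r$ that meets $G_{i-1}$ in exactly one edge gives $\Delta v_i = r-2$ and $\Delta e_i = \binom r2 -1$, and since $\mm < m_2(K_r) = \bigl(\binom r2 -1\bigr)/(r-2)$ \emph{strictly} (\thref{identitiesform2}), this step makes $\lambda$ strictly decrease --- it is not tight. This is consistent with \thref{claim:beta}\ref{beta:less0} and is why the algorithm places \emph{every} clique-attachment step into $D_i$. Second, and more seriously, attaching a copy of $C_\ell$ that meets $G_{i-1}$ in one edge gives $\Delta v_i = \ell-2$, $\Delta e_i = \ell -1$, and since $\mm > r/2 \ge 2 > (\ell-1)/(\ell-2)$ we get $(\ell-2) - (\ell-1)/\mm > 0$: a bare cycle attachment \emph{increases} $\lambda$, contrary to your claim that it is $\le 0$. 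Consequently a step-by-step analysis of single copies of $K_r$ and $C_\ell$ cannot produce the dichotomy asserted by the lemma.

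The missing idea is that in its else-clause \hypertree\ attaches not a single cycle but an entire \emph{flower}: one hyperedge $C\in\cE_2(\cH)$ together with a petal $P_e\in\cE_1(\cH)$ for each new edge $e$ of $C$. The proof must show that the strictly negative contributions of the petals --- each bounded via $\beta_{r,\ell}(J)\le\beta_{r,\ell}(K_2)$, where $\beta_{r,\ell}(K_2)+ \tfrac{\ell-2}{\ell-1}-\tfrac1{\mm}=0$ --- exactly cancel the positive contribution of the cycle in the unique ``perfect flower'' configuration, in which $C$ meets $G_{i-1}$ in one edge and the $\ell-1$ petals are pairwise disjoint outside $C$; this is precisely the vertex-count condition $|V(G_i)\setminus V(G_{i-1})|=(r-1)(\ell-1)-1$ under which the algorithm declares the step non-degenerate. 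In every other flower configuration one must extract a uniform deficit $\delta$, which the paper does by ordering the petals along the cycle and crediting each selected petal with the isolated vertices of its intersection graph; your ``finite case check'' would in principle cover this, since all configurations are bounded in terms of $r$ and $\ell$, but without the flower-level bookkeeping the check is being run on the wrong objects. Note also that $D_T$ is defined combinatorially inside the algorithm (clique step, or flower step producing too few new vertices), not as ``the set of steps where $\lambda$ drops''; the content of the lemma is precisely that these two descriptions coincide, and the equality analysis above is where that equivalence is established.
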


We say that the $i$-th step of \hypertree$(\cH)$ is \emph{degenerate} if $i\in D_T$, and \emph{non-degenerate} otherwise.
The stopping conditions on $\lambda$ (see~\thref{hypertreeBasics}\ref{hypertreeBasics-stopping}) combined with~\thref{deg-lambda} imply that \hypertree$(\cH)$ must have a bounded number of degenerate steps.
This will be essential to show that the set of all possible outputs given by \hypertree\ is at most polylogarithmic in $n$ when applied over $\bigcup_{v(G)=n} \Crit_{r,\ell}(G)$.
To be more precise, for each $n,r,\ell\ge4$, 
{\setstretch{1.1} consider the family of non-isomorphic graphs
	\begin{align*}
		\Out_{r,\ell}(n) = 
		\bigcup_{G \,:\, v(G) = n} \big\{ \graph{\cH_T} \ : \ \cH\in \Crit_{r,\ell}(G) \big\},
	\end{align*}	
where $T= T(\cH)$ and $\cH_T$ are the stopping time and the output given by \hypertree$(\cH)$, respectively. 
The next lemma bounds the size of $\Out_{r,\ell}(n)$. 
\begin{lemma}\thlabel{lem:boundOut}
		For all $r,\ell \ge 4$, there exists $C>0$ such that 
		$|\Out_{r,\ell}(n)|\le (\log n)^{C}$, for all $n\in \N$. 
\end{lemma}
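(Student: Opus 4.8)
The plan is to bound the number of possible outputs $\graph{\cH_T}$ by reconstructing each one from a small amount of ``recording'' data, and then counting the number of possible records. For a fixed host size $n$, the algorithm \hypertree\ builds the sequence $\cH_0 \se \cH_1 \se \dots \se \cH_T$ by attaching, at step $i$, either a copy of $K_r$ or a copy of $C_\ell$ to the current underlying graph $G_{i-1}$, meeting it in a prescribed way; by \thref{hypertreeBasics}\ref{hypertreeBasics-vertices} each step introduces at least one new vertex. I would first argue that once we know, for each step $i$, (i) which ``type'' of attachment occurred (clique or cycle), (ii) the intersection pattern of the new hyperedge with the vertex set $V(G_{i-1})$ — which for non-degenerate steps is tightly constrained, since attaching a copy of $C_\ell$ non-degenerately meets $G_{i-1}$ in essentially one edge and attaching a $K_r$ meets it in at most a bounded set — and (iii) for the vertices of $G_{i-1}$ that the new piece attaches to, their identities, then $\graph{\cH_T}$ is determined up to isomorphism. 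The key point is that the new vertices can be named canonically (say, in order of introduction), so the only genuine choices are the attachment data at each of the $T$ steps.

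Next I would control $T$ and the per-step choice count. By \thref{hypertreeBasics}\ref{hypertreeBasics-stopping} the algorithm stops as soon as $\lambda(G_T) \le -\eps$ or $T \ge \log n$, so $T \le \log n$ always. For a \emph{non-degenerate} step, \thref{deg-lambda}\ref{item:1@deg-lambda} gives $\lambda(G_i) = \lambda(G_{i-1})$, and the structural description of \hypertree\ (Section~\ref{sec:algos}) forces the attached piece to meet $G_{i-1}$ in a bounded configuration whose ``anchor'' vertices number at most some constant $a = a(r,\ell)$; specifying such a step therefore costs at most a bounded number of type-choices times $n^{a}$ — but crucially, as I explain below, almost all of that $n^a$ is absorbed. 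For a \emph{degenerate} step $i \in D_T$, \thref{deg-lambda}\ref{item:2@deg-lambda} gives $\lambda(G_i) \le \lambda(G_{i-1}) - \delta$; since $\lambda(G_0) = \lambda(K_r) = r - \binom{r}{2}/\mm$ is a constant depending only on $r,\ell$, and since the process stops once $\lambda$ drops below $-\eps$, the number of degenerate steps is at most $(\lambda(G_0) + \eps)/\delta =: D_0 = D_0(r,\ell)$, a constant. Each degenerate step can be specified by naming up to a bounded number of already-present vertices, i.e.\ at most $n^{b}$ choices for a constant $b = b(r,\ell)$; over all at most $D_0$ degenerate steps this contributes at most $n^{b D_0}$, which is $\mathrm{polylog}$? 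No — it is polynomial, so this naive bound is not good enough, and here is where the auxiliary set $D_T$ and a more careful encoding must be used.

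To get a genuinely polylogarithmic bound, I would instead encode each degenerate step's anchor vertices not by their global names in $[n]$ but \emph{relative to the current graph $G_{i-1}$}: since $v(G_{i-1}) \le v(K_r) + (\text{bounded per-step growth})\cdot i \le r + c_0\log n$ for a constant $c_0$, each anchor vertex costs only $\log(r + c_0 \log n) = O(\log\log n)$ bits, so a degenerate step costs at most $(\log n)^{o(1)}$ choices, and with at most $D_0$ of them the total degenerate contribution is $(\log n)^{o(1)}$. Non-degenerate steps are even cheaper: their attachment pattern is determined up to the bounded type-choice by the structural lemmas (the new $C_\ell$ or $K_r$ hangs off $G_{i-1}$ at a single edge or a bounded clique), and again the relevant vertices are named relative to $G_{i-1}$ at cost $O(\log\log n)$ each; since there are $T \le \log n$ steps, the non-degenerate contribution is at most $(\log n)^{O(\log\log n)} = n^{o(1)}$? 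That is too large. The correct fix, which I expect to be the crux, is that a \emph{non-degenerate} step in fact has only a \emph{bounded} number of possibilities once the previous graph is fixed — attaching $C_\ell$ sharing exactly one edge $e$ with $G_{i-1}$ requires choosing $e$ among the $O(1)$ ``active'' edges maintained by the algorithm, not among all of $E(G_{i-1})$ — so each non-degenerate step costs $O(1)$, giving $2^{O(\log n)} = n^{O(1)}$ for the non-degenerate part; but \emph{that} is still polynomial.

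Reconciling this tension is the heart of the argument: the resolution is that the algorithm is designed (see Section~\ref{sec:algos}) so that the whole non-degenerate ``growth'' between consecutive degenerate steps is \emph{canonical} — there is no choice at all in a non-degenerate step, the algorithm deterministically attaches the unique available piece — so the \emph{only} branching in the entire execution happens at the at most $D_0$ degenerate steps. Hence $|\Out_{r,\ell}(n)| \le (\text{type-choices})^{D_0} \cdot (r + c_0\log n)^{a D_0} \le (\log n)^{C}$ for a suitable $C = C(r,\ell)$, using that $a, D_0, c_0$ are all constants. Concretely, the proof I would write: (1) invoke \thref{hypertreeBasics} and \thref{deg-lambda} to bound $|D_T| \le D_0(r,\ell)$ and $T \le \log n$; (2) use the algorithm's specification to show non-degenerate steps are deterministic given the current state, so an execution is determined by the (at most $D_0$) degenerate steps' data; (3) bound $v(G_{i-1}) = O(\log n)$ via \thref{hypertreeBasics}\ref{hypertreeBasics-vertices} together with the bounded per-step vertex growth, so each degenerate step has at most $(\log n)^{a}$ possibilities; (4) multiply to get $(\log n)^{a D_0} \le (\log n)^C$. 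The main obstacle is step (2) — pinning down exactly which parts of \hypertree\ are deterministic and which branch — and step (3)'s bounded-growth claim, both of which depend on the precise algorithm description deferred to Section~\ref{sec:algos}; if non-degenerate steps turn out to carry even $O(1)$ branching, the bound degrades to polynomial and one must additionally exploit the set $D_T$ to charge that branching against the $\delta$-drops, which would require a compensating argument that I would develop only after seeing the algorithm in detail.
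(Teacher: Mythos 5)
Your proposal is correct and follows essentially the same route as the paper: bound the number of degenerate steps by a constant via the $\delta$-drops of $\lambda$ (Lemma~\ref{deg-lambda}) together with $\lambda(G_0)=\lambda(K_r)$ and the stopping threshold $-\eps$, observe $T\le\log n$, show that non-degenerate steps are deterministic given the current labelled graph (the paper does this via the canonical labelling $\sigma_{t-1}$ and the uniquely determined seed edge $e_0$ of Lemma~\ref{existenceofe0}, matching your ``canonical growth'' resolution), and charge each degenerate step at most polylogarithmically many choices since the current graph has $O(\log n)$ vertices. Your final synthesis (1)--(4) is exactly the paper's argument, formalised there as Lemma~\ref{Fdk}.
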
 
We prove this lemma in Section~\ref{sec:analysis}.
Now, we are ready to prove~\thref{thm:maintech} assuming all the lemmas stated in this section.

\begin{proof}[Proof of~\thref{thm:maintech}]
	Fix $n,r,\ell \ge 4$.
	For each $G \in \mathcal{R}_{n}(K_{r},C_{\ell})$,
	let $\cH(G)$ be a $\star$-critical hypergraph in
	$\Crit_{r,\ell}(G)$.
	By~\thref{GRamseyhascritical}, such a hypergraph must exist.
	Let $(\cH_{i}(G))_{i=0}^{T}$ be the 
	sequence of hypergraphs generated by \hypertree$(\cH(G))$.
	By~\thref{hypertreeBasics}, the last hypergraph of this 
	sequence, namely $\cH_{T}(G)$, is the hypergraph output by 
	\hypertree$(\cH(G))$.

	Define
	\begin{align*}
		f: \mathcal{R}_{n} &\to \Out_{r,\ell}(n) \\
		G & \mapsto \graph{\cH_{T}(G)}.
	\end{align*}
	As $\cH_{T}(G) \se \cH(G)$ by~\thref{hypertreeBasics}, and 
	$\graph{\cH(G)}\se G$ by construction, we have
	$f(G)\subseteq G$ for each $G \in \cR_{n}$.
	Moreover, 
	by~\thref{hypertreeBasics}\ref{hypertreeBasics-stopping}, 
	we have $\lambda(f(G))\le -\eps$  or $T \ge \log n$.
	In the first case, $f(G)$ belongs to the set of graphs
	\begin{align*}
	\mathcal{J}_1(\eps)=\{H: \lambda(H)\le-\eps \}.
	\end{align*}
	In the second case, we claim that 
	$f(G) \in \cJ_2(C,n)$, where $C = \lambda(K_r)$.
	To see that, first note that the sequence 
	$\big(v(\cH_{i}(G))\big)_{i=0}^{T}$ is 
	strictly increasing 
	by~\thref{hypertreeBasics}\ref{hypertreeBasics-vertices}.
	For simplicity, let $G_{i} = \graph{\cH_{i}(G)}$.
	Since $e(G_{i}) = v(\cH_{i}(G))$ for every 
	$i \in \{0,\ldots,T\}$, 
	we have
	\begin{align}\label{J_2ineq1}
	e(f(G))=v(\cH_{T}(G))\ge T \ge \log n.
	\end{align}
	Moreover, by~\thref{deg-lambda}, we have 
	$ \lambda (G_{i}) \le 
	\lambda (G_{i-1}) \text{ for each }i \in \{0,\ldots,T\}$,
	where $G_{0}\cong K_{r}$ by~\thref{hypertreeBasics}
	\ref{hypertreeBasics-E0}.
	In particular,
	\begin{align}\label{J_2ineq2}
		\lambda(f(G))=\lambda(G_{T})\le 
		\lambda(G_{0}) = \lambda(K_{r}).
	\end{align}
	Together,~\eqref{J_2ineq1} and~\eqref{J_2ineq2} imply that
	$f(G) \in \cJ_2(C,n)$, where $C=\lambda(K_{r})$. 
	This proves our claim.

	Now, it only remains to show that 
	$|f(\cR_{n})|\le (\log n)^{C_{0}}$, for some constant 
	$C_{0}>0$.
	But, this follows directly from~\thref{lem:boundOut}. 
	We finish the proof by setting $M=\max\{C_{0},C\}$.
\end{proof}
}

\section{The structural lemmas}\label{sec:structural}

In this section, we obtain some key structural information
about Ramsey hypergraphs and prove Lemmas~\ref{GRamseyhascritical}
and~\ref{dens-critic}.

Given two families of sets $\cE_{1}$ and $\cE_{2}$ on a vertex set
and a hypergraph $\cH$,
define 
\begin{align*}
\cE_{1}(\cH)= \cE_{1}\cap E(\cH) \qquad \text{and} 
\qquad \cE_{2}(\cH)= \cE_{2}\cap E(\cH).
\end{align*}
We refer to the hyperedges of $\cE_{1}(\cH)$ and $\cE_{1}(\cH)$ as, respectively, hyperedges of type $1$ and $2$.
We say that $\cH$ is Ramsey for $(\cE_1,\cE_2)$, and we write $\cH \to (\cE_1,\cE_2)$,
if the following holds.
For every $2$-colouring $c: V(\cH)\to \{1,2\}$, there exists a hyperedge $E \in \cE_{i}(\cH)$ such that $c(E)=\{i\}$, 
for some $i \in \{1,2\}$. 
Conversely, we write $\mathcal{H}\nto (\cE_1,\cE_2)$ if $\mathcal{H} \to (\cE_1,\cE_2)$ is not satisfied. 
Clearly, if $\cH \subseteq \cF$ and $\cH \to (\cE_1,\cE_2)$, then $\cF \to (\cE_1,\cE_2)$.
Therefore, we may concentrate on the minimal hypergraphs 
$\cH$ that satisfy $\cH\to (\cE_1,\cE_2)$.

We call a hypergraph $\cH$ \emph{Ramsey minimal} with respect to 
$(\cE_1,\cE_2)$ if $\cH \to (\cE_1,\cE_2)$, yet the removal of any hypervertex or hyperedge from $\cH$ destroys this property. 
Minimal Ramsey hypergraphs have the $\star$-critical property, as the next lemma shows.
Its proof follows the same steps as the proof of~\cite[Claim 1]{kreuter}.
A particular case of it can be also found in~\cite[Section 3]{kohayakawa1997threshold}.

\begin{lemma}\thlabel{ramsey-crit}
	Let $\cE_{1}$ and $\cE_{2}$ be two disjoint families of sets 
	on a vertex set.
	If a hypergraph $\cH$ is Ramsey minimal with respect to 
	$(\cE_{1}, \cE_{2})$, then the following holds.
	For each $i \in \{1,2\}$, each hyperedge $E \in \cE_i$, 
	and each hypervertex $e \in E$, there exists a hyperedge
	$F\in\cE_{3-i}$ such that $E \cap F = \{e\}$.
	In particular, $\cH$ is $\star$-critical.
\end{lemma}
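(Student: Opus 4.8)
\textbf{Proof proposal for Lemma~\ref{ramsey-crit}.}
The plan is to argue by contradiction using the minimality of $\cH$, exactly in the spirit of~\cite[Claim 1]{kreuter}. Fix $i \in \{1,2\}$, a hyperedge $E \in \cE_i$, and a hypervertex $e \in E$. Suppose, for contradiction, that there is no hyperedge $F \in \cE_{3-i}$ with $E \cap F = \{e\}$; that is, every hyperedge of $\cE_{3-i}$ that contains $e$ also contains some other hypervertex of $E$. I would then exhibit a colouring of $V(\cH \setminus E)$, the hypergraph obtained by deleting the hyperedge $E$ (but keeping all hypervertices), witnessing $\cH \setminus E \nto (\cE_1,\cE_2)$, which contradicts Ramsey minimality since $\cH \setminus E$ is a proper subhypergraph of $\cH$.

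The colouring is built as follows. Since $\cH \setminus E$ is not Ramsey (by minimality of $\cH$, the removal of the hyperedge $E$ destroys the Ramsey property), fix a colouring $c_0 : V(\cH \setminus E) \to \{1,2\}$ with no monochromatic hyperedge of the appropriate type, i.e. no $E' \in \cE_1(\cH \setminus E)$ with $c_0(E') = \{1\}$ and no $E' \in \cE_2(\cH \setminus E)$ with $c_0(E') = \{2\}$. Now modify $c_0$ to a colouring $c$ of $V(\cH)$ (which has the same hypervertex set as $\cH \setminus E$) by recolouring the single hypervertex $e$ with colour $3-i$, and leaving all other hypervertices as in $c_0$. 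I claim $c$ witnesses $\cH \nto (\cE_1,\cE_2)$. First, no hyperedge of $\cE_i$ is monochromatic in colour $i$ under $c$: the hyperedge $E$ contains $e$, which now has colour $3-i \neq i$, so $E$ is not $i$-monochromatic; any other hyperedge $E' \in \cE_i$ with $E' = E$ is excluded, and for $E' \neq E$ we have $E' \in \cE_i(\cH \setminus E)$, and since $c$ agrees with $c_0$ off $e$, if $E'$ were $i$-monochromatic under $c$ then (as colour $i \neq 3-i$) it would not contain $e$, hence $E'$ is $i$-monochromatic under $c_0$ too, contradicting the choice of $c_0$.

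It remains to check that no hyperedge of $\cE_{3-i}$ is monochromatic in colour $3-i$ under $c$. Let $F \in \cE_{3-i}$ be such that $c(F) = \{3-i\}$. If $e \notin F$, then $c$ and $c_0$ agree on $F$, so $c_0(F) = \{3-i\}$, contradicting the choice of $c_0$. So $e \in F$. By our contradiction hypothesis, $F$ contains some hypervertex $e' \in E$ with $e' \neq e$. But $E \in \cE_i$ and $e' \in E$, and the only hypervertex of $E$ that $c$ colours $3-i$ is $e$ itself? This is exactly where I need more care: I should instead observe that $c_0(e') \in \{1,2\}$ and, in order to have $c(F) = \{3-i\}$, I need $c(e') = 3-i$, i.e. $c_0(e') = 3-i$; but this alone is not yet a contradiction. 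The clean fix is to first recolour \emph{all} of $E$: define $c$ by setting $c(e) = 3-i$ and $c(x) = i$ for every $x \in E \setminus \{e\}$, and $c(x) = c_0(x)$ otherwise. Then $E$ is not $i$-monochromatic (because of $e$), and any $F \in \cE_{3-i}$ with $e \in F$ contains some $e' \in E \setminus \{e\}$ with $c(e') = i \neq 3-i$, so $F$ is not $(3-i)$-monochromatic; while hyperedges avoiding $E$ entirely are handled by $c_0$, and one must finally check that recolouring $E\setminus\{e\}$ to colour $i$ does not create an $i$-monochromatic hyperedge of $\cE_i$ other than possibly $E$ — here one uses that any such hyperedge $E'$ meeting $E$ in a vertex coloured $i$, if $i$-monochromatic, would have to be $E$ itself by a symmetric application of the hypothesis (with the roles of the two vertices swapped), or one argues directly. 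The main obstacle is precisely this bookkeeping: ensuring the recolouring kills the target hyperedge $E$ of type $i$ \emph{and} all newly-threatened hyperedges of type $3-i$ through $e$, \emph{without} inadvertently creating a monochromatic hyperedge of type $i$ elsewhere; getting the direction of the recolouring right (vertices of $E \setminus \{e\}$ to colour $i$, forcing $F$'s through $e$ to be non-monochromatic in $3-i$) is the crux, and the $\star$-critical conclusion then follows immediately by unwinding the definition: every hypervertex lies in some hyperedge of $\cE_2$ (else deleting that isolated hypervertex would not destroy Ramseyness), and the displayed property with $i=2$ gives the second clause of Definition~\ref{starcritical}.
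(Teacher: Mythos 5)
There is a genuine gap, and it sits exactly where you flag it. Your proposed fix --- forcibly recolouring every vertex of $E \setminus \{e\}$ with colour $i$ --- cannot be justified by your contradiction hypothesis: that hypothesis only concerns hyperedges of $\cE_{3-i}$ containing $e$, and says nothing whatsoever about hyperedges of $\cE_i$ that meet $E \setminus \{e\}$. So the ``symmetric application of the hypothesis'' you appeal to does not exist, and recolouring $E \setminus \{e\}$ may well create a new $i$-monochromatic hyperedge $E' \in \cE_i$ with $E' \neq E$. The argument as written does not close.

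The missing idea is that no recolouring of $E \setminus \{e\}$ is needed, because the good colouring $c_0$ of $\cH \setminus E$ \emph{already} colours all of $E$ with colour $i$: since $\cH \to (\cE_1,\cE_2)$ and $c_0$ admits no monochromatic hyperedge of the correct type in $\cH \setminus E$, the only hyperedge that can be monochromatic of its own type-colour under $c_0$ is $E$ itself, and it must be, so $c_0(E) = \{i\}$. This also makes the proof direct rather than by contradiction: flip only $e$ to colour $3-i$; then no type-$i$ hyperedge is $i$-monochromatic, so Ramseyness of $\cH$ \emph{produces} a hyperedge $F \in \cE_{3-i}$ monochromatic in colour $3-i$; since $F$ was not monochromatic under $c_0$ it contains $e$, and since every other vertex of $E$ still has colour $i$, we get $E \cap F = \{e\}$. (A minor secondary point: for the first clause of $\star$-criticality, minimality only tells you each hypervertex lies in \emph{some} hyperedge; to land it in $\cE_2$ you must additionally apply the displayed property with $i=1$ when that hyperedge happens to be of type $1$.)
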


\begin{proof}
	Fix any hyperedge $E \in \cE_{i}(\cH)$, 
	for some $i \in \{1,2\}$, and any hypervertex 
	$e \in E$.
	Let $\cH \setminus E$
	be the hypergraph with vertex set $V(\cH)$ and
	hyperedge set $E(\cH)\setminus \{E\}$.
	Consider any colouring $c: V(\cH) \to \{1, 2\}$  
	for which there is no hyperedge of type $j$ in 
	$\cH \setminus E$ coloured $j$ under $c$,
	for all $j \in \{1,2\}$.
	This colouring exists because $\cH$ is Ramsey minimal.
	As $\cH \to (\cE_{1},\cE_{2})$, all the hypervertices in $E$ 
	must be coloured $i$ under $c$.
	Moreover, $E$ is the only monochromatic
	hyperedge under $c$ which has the same colour as its number 
	type.
	Now, let $c': V(\cH) \to \{1,2\}$ 
	be the colouring such that $c'(f) = c(f) \iff f \neq e$
	(recall that $e \in E$).
	As $\cH \to (\cE_{1},\cE_{2})$ and
	$E$ is not monochromatic of colour $i$ under $c'$, there must	exist a hyperedge $F \in \cE_{3-i}(\cH)$ such that 
	$c_E'(F)=3-i$ 
	and $E \cap F = \{e\}$, as required.
\end{proof}

Now we are ready to prove~\thref{GRamseyhascritical}.

\begin{proof}[Proof of~\thref{GRamseyhascritical}]
	If $G \to (K_{r},C_{\ell})$, then 
	$\cG_{r,\ell}(G)\to (\cE_{1},\cE_{2})$, where 
	$\cE_{1}$ and $\cE_{2}$ are defined in~\eqref{hyperedges}.
	Let $\cH$ be an arbitrary Ramsey minimal subhypergraph of
	$\cG_{r,\ell}(G)$. 
	Then $\cH$ is $\star$-critical, by~\thref{ramsey-crit}, and 
	therefore $\cH \in \Crit_{r,\ell}(G)$, as required.
\end{proof}

We now turn to the proof of~\thref{dens-critic}.
In order to prove it, we require some structural information about underlying graphs of $\star$-critical hypergraphs.
This structural information is obtained in~\thref{inde-critic} and,
before stating it, it is worth to point out the following observation.
\begin{obs}\thlabel{degr}
Let $r,\ell \ge 3$.
For any graph $H$ and any hypergraph $\cH \in \Crit_{r,\ell}(H)$, 
we have $d(v)\ge r$ for all $v \in V(\graph{\cH})$.
\end{obs}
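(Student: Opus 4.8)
The plan is to argue directly from the $\star$-critical property. Fix a graph $H$, a hypergraph $\cH \in \Crit_{r,\ell}(H)$, and a vertex $v \in V(\graph{\cH})$. By definition, $v$ is an edge of $H$, say $v = xy \in E(H)$, and $v$ lies in the underlying graph of $\cH$, so $v$ is a hypervertex of $\cH$ that belongs to some hyperedge of $\cH$. By the first clause of $\star$-criticality (\thref{starcritical}), there is a hyperedge $F \in \cE_2(\cH)$ with $v \in F$; that is, $F$ is (the edge set of) a copy of $C_\ell$ in $H$ passing through the edge $xy$. By the second clause of $\star$-criticality applied to this $F$ and the hypervertex $v \in F$, there exists a hyperedge $E \in \cE_1(\cH)$ with $E \cap F = \{v\}$; that is, $E$ is (the edge set of) a copy $K$ of $K_r$ in $H$ that contains the edge $xy$.

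The key step is then purely graph-theoretic: since $K \cong K_r$ contains the edge $xy$, both $x$ and $y$ are among the $r$ vertices of $K$, and every one of the remaining $r-2$ vertices of $K$ is joined to both $x$ and $y$ inside $K \subseteq \graph{\cH}$. Now recall that in the hypergraph $\cH$, a hypervertex is an edge of $H$, so the ``vertices'' of $\graph{\cH}$ adjacent to $v = xy$ — meaning the hypervertices incident to $v$ when we think of $\cH$'s vertex set as $E(\graph{\cH})$ — are exactly the edges of $\graph{\cH}$ sharing an endpoint with $xy$. Wait: I need to be careful about which ``degree'' is meant. Reading the statement as $d(v) \ge r$ for $v \in V(\graph{\cH})$ with $\graph{\cH}$ an ordinary graph, the vertices in question are the actual vertices of the graph $\graph{\cH}$, and $d(v)$ is the ordinary graph-degree. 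So the correct reading is: take any vertex $w$ of the graph $\graph{\cH}$. Some hyperedge of $\cH$ uses an edge incident to $w$; unwinding $\star$-criticality as above produces a copy $K \cong K_r$ of $\graph{\cH}$ that contains $w$ (indeed $w$ is an endpoint of an edge lying in such a $K_r$). Since every vertex of $K_r$ has $r-1$ neighbours inside $K_r$, and $K \subseteq \graph{\cH}$, we get $d_{\graph{\cH}}(w) \ge r - 1$. To upgrade $r-1$ to $r$: a single copy of $K_r$ only gives $r-1$, so one needs a second copy of $K_r$ through $w$ that is not entirely contained in the first. This is where the $\star$-critical structure is used again — each edge of the $C_\ell$ through (an edge at) $w$ spawns its own $K_r$, and since $\ell \ge 3$ these cycles contribute at least two distinct edges at the relevant hypervertex, forcing the associated cliques to be distinct and supplying a neighbour of $w$ outside the first clique.

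The main obstacle is precisely this last bookkeeping: carefully tracking the correspondence between hypervertices of $\cH$ (edges of $\graph{\cH}$) and vertices of $\graph{\cH}$, and checking that the two copies of $K_r$ obtained from two distinct hyperedges of $\cE_2(\cH)$ at a hypervertex incident to $w$ really do contribute two different graph-neighbours of $w$. Once the indexing is set up correctly this is a short counting argument; I would spell it out by fixing an edge $e = wz$ of $\graph{\cH}$ incident to $w$, taking $F \in \cE_2(\cH)$ with $e \in F$ and $E \in \cE_1(\cH)$ with $E \cap F = \{e\}$, noting $w$ lies in the $K_r$ given by $E$, and then using that $w$ has degree $r-1$ in that $K_r$ together with one extra neighbour produced by a second application of $\star$-criticality to a different hyperedge at a suitable hypervertex. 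I expect no delicate inequality is needed — the bound $r \le \ell$ or similar is irrelevant here — just the definitions.
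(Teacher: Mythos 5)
Your proposal is correct and follows essentially the same route as the paper's (two-sentence) justification: fix an edge $e = wz$ incident to $w$, use $\star$-criticality to produce $K \cong K_r$ and $C \cong C_\ell$ with $E(K)\cap E(C) = \{e\}$, and count the $r-1$ neighbours of $w$ inside $K$ plus one further neighbour forced by $C$. The only streamlining worth noting is that your second invocation of $\star$-criticality is not needed: the other $C$-neighbour $v'$ of $w$ already cannot lie in $V(K)$, since otherwise completeness of $K$ would place $wv' \in E(K)\cap E(C) = \{e\}$, i.e.\ $v' = z$, contradicting $\ell \ge 3$.
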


In fact, if $\cH$ is $\star$-critical, then for any $e \in E(\graph{\cH})$ there exists $K \in \cE_{1}(\cH)$ and $C \in \cE_{2}(\cH)$
such that $C \cap K = \{e\}$.
As $K$ and $C$ are copies of $K_{r}$ and $C_{\ell}$ contained in $\graph{\cH}$, respectively,
we can easily infer that $d(v)\ge r$ for all $v \in V(\graph{\cH})$.

Now, we need to set some notations. 
For each graph $G$, define
\begin{align}\label{AandB}
	A=A(G)=\{v \in V(G):d(v)=r \} \qquad \text{and} \qquad 
	B=B(G)=\{v \in V(G):d(v)>r \}.
\end{align} 
By~\thref{degr}, $V(G)$ can be partitioned into $V(G) = A \cup B$ whenever $G$ is the underlying graph of a $\star$-critical hypergraph.
Below, we use $N(v)$ to denote the neighbourhood of a vertex $v$ in $G$ and, for each $S \subseteq V(G)$, we write $d_{S}(v)=|N(v)\cap S|$.
Our structural lemma is as follows.

\begin{lemma}\thlabel{inde-critic}
	Let $r \ge 3$ and $\ell \ge 4$ be integers, 
	and $H$ be a graph.
	For any hypergraph $\cH \in \Crit_{r,\ell}(H)$, 
	we have that
	\begin{enumerate}
	\item[$(1)$] $A$ is an independent set in $\graph{\cH}$ and
	\item[$(2)$] $d_{B}(v)\ge r-2$, 
	for all $v \in V(\graph{\cH})$.
	\end{enumerate}
\end{lemma}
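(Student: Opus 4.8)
The plan is to analyze the local structure forced by the $\star$-critical property, working edge by edge and vertex by vertex. Let $\cH \in \Crit_{r,\ell}(H)$ and write $G = \graph{\cH}$. For part $(1)$, suppose for contradiction that there is an edge $uv \in E(G)$ with $u,v \in A$, i.e.\ $d(u) = d(v) = r$. By the $\star$-critical property applied to the hypervertex $uv$, there exists a copy $K \cong K_r$ and a copy $C \cong C_\ell$ in $G$ with $E(K) \cap E(C) = \{uv\}$. Since $u \in K$ and $u$ has exactly $r-1$ neighbours inside $K$, and $u$ also lies on the cycle $C$ with a neighbour $w \neq v$ along $C$, this neighbour $w$ must already be one of the $r-1$ vertices of $K \setminus \{u\}$ (otherwise $d(u) \ge r$ would be violated). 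The same holds for $v$: its cycle-neighbour $w' \neq u$ on $C$ lies in $K$. I would then use $\ell \ge 4$ to observe that following the cycle $C$ out of the edge $uv$ forces new vertices or new edges inside $K$ that contradict $E(K)\cap E(C)=\{uv\}$ or the degree constraint; the precise case analysis will depend on whether $w = w'$ (possible only if $\ell = 3$, excluded) and on how the cycle re-enters $K$. This local contradiction establishes $(1)$.

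For part $(2)$, fix any vertex $v \in V(G)$; I want to show $d_B(v) \ge r-2$. Pick any edge $vx$ incident to $v$ and apply the $\star$-critical property to obtain $K \cong K_r$ and $C \cong C_\ell$ with $E(K) \cap E(C) = \{vx\}$, so $v \in V(K)$. Let $K \setminus \{v\} = \{x, y_1, \dots, y_{r-2}\}$ be the other vertices of the clique, all neighbours of $v$. The goal is to show that at least $r-2$ of these $r$ neighbours (the $y_j$ together with possibly $x$) lie in $B$. The key point is that each $y_j$ is adjacent to all of $K$, hence $d(y_j) \ge r-1$, and I need to rule out $d(y_j) = r$ for all but one of them — or more precisely show $d(y_j) > r$, i.e.\ $y_j \in B$, for at least $r-2$ of the neighbours. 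Here I would invoke part $(1)$: since $A$ is independent, and $v$ together with each $y_j$ and $x$ all lie in a clique $K$ of size $r \ge 4$, at most one vertex of $K$ can lie in $A$. Therefore at least $r-1$ vertices of $K$ lie in $B$, and in particular at least $r-2$ of the neighbours of $v$ in $K$ (excluding the at most one that could be in $A$, which might be $v$ itself) lie in $B$. This gives $d_B(v) \ge r-2$.

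I expect the main obstacle to be the case analysis in part $(1)$: carefully tracing the cycle $C_\ell$ as it passes through the edge $uv$ and showing that it cannot avoid creating a chord of $K$ or an extra edge at $u$ or $v$, while correctly using $\ell \ge 4$ (and recalling $r \ge 3$ suffices here, per the statement). The degenerate possibilities — the cycle re-entering the clique immediately, or $\ell$ being small — need to be handled explicitly. Part $(2)$ is then a short deduction from part $(1)$ plus the observation that a $K_r$ with $r \ge 4$ can contain at most one degree-exactly-$r$ vertex once we know $A$ is independent. I would also double-check the edge case $r = 3$ for part $(2)$ (where $r - 2 = 1$), since then the claim just says every vertex has a neighbour of degree $> 3$, which should follow the same way but with less slack.
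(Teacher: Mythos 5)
Part $(2)$ of your proposal is correct and essentially matches the paper: once $A$ is known to be independent, any copy of $K_r$ through $v$ (which exists by the $\star$-critical property) contains at most one vertex of $A$, so at least $r-2$ of $v$'s neighbours in that clique lie in $B$. The problem is part $(1)$, on which part $(2)$ depends, and there your argument has a genuine gap. First, the concrete claim you make is false: if $w$ is the cycle-neighbour of $u$ on $C$ with $w \neq v$, then $w$ \emph{cannot} be one of the vertices of $K \setminus \{u\}$, because then $uw$ would lie in $E(K) \cap E(C) = \{uv\}$. What $d(u) = r$ actually forces is that $w$ is the \emph{unique} neighbour of $u$ outside $V(K)$, i.e.\ $N(u) = (V(K)\setminus\{u\}) \cup \{w\}$. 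Second, and more seriously, no contradiction can be extracted from the single pair $(K,C)$: the configuration consisting of a copy of $K_r$ together with a path of length $\ell-1$ joining $u$ and $v$ externally is a perfectly consistent local picture in which $d(u)=d(v)=r$ and $E(K)\cap E(C)=\{uv\}$. So ``tracing the cycle'' out of $uv$ cannot by itself force a chord of $K$ or an extra edge at $u$ or $v$.

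The missing idea is that the $\star$-critical property must be applied \emph{again}, to a different edge of the clique. The paper fixes $w \in V(R_1)\setminus\{u,v\}$ and invokes $\star$-criticality for the edge $uw$, obtaining a second pair $(R_2, C_2)$ with $E(R_2)\cap E(C_2) = \{uw\}$; the constraint $d(u)=r$ then pins down $V(R_2)$ (either $R_2 = R_1$, forcing $u_c \in V(C_2)$, or $V(R_2) = \{u_c\}\cup V(R_1)\setminus\{v\}$, forcing $v \in V(C_2)$, where $u_c$ is the external cycle-neighbour of $u$). A \emph{third} application of $\star$-criticality, now to an edge of $C_2$ incident to $u$, produces a clique that is forced (again by $d(u)=r$) to contain both $v$ and $u_c$, making them adjacent; since $u_c \notin V(R_1)$ and $d(v) = r$, this is only possible if $u_c$ is $v$'s other cycle-neighbour on $C_1$, i.e.\ if $\ell = 3$. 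This iterated use of the property, and the bookkeeping of which vertices the degree-$r$ condition permits in each new clique, is the substance of the proof and is absent from your sketch. You would need to supply it (or an equivalent mechanism) before part $(1)$, and hence part $(2)$, can be considered proved.
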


\begin{proof}[Proof]	
	First, let us prove item (1). 
	Suppose for a contradiction that there are two adjacent 
	vertices $u,v \in V(\graph{\cH})$ such that $d(u)=d(v)=r$. 
	As $\mathcal{H}$ is $\star$-critical, there exists an 
	$r$-clique $R_1 \in \cE_{1}(\cH)$ and an $\ell$-cycle 
	$C_1 \in \cE_{2}(\cH)$ such that 
	$E(R_1) \cap E(C_1)=\{u,v\}$. 
	Now, let $u_{c}$ be the neighbour
	of $u$ in $V(C_{1})\setminus \{u,v\}$
	and fix any vertex $w \in V(R_{1})\setminus \{u,v\}$.
	Then, we have the following claim.
	\begin{claim}
		There exists an $\ell$-cycle $C_{2}\in \cE_{2}(\cH)$ 
		such that either 
		$\{u_{c},u,w\}\se V(C_{2})$ or $\{v,u,w\}\se V(C_{2})$.
	\end{claim}

	\begin{proof}[Proof of the Claim]
		As $\cH$ is $\star$-critical, there exists 
		an $r$-clique $R_2$ and an $\ell$-cycle $C_2$ such that 
		$E(R_2) \cap E(C_2)=\{u,w\}$. 
		If $R_{2} = R_{1}$, then $C_{2}$ must contain $u_{c}$, 
		as $d(u)=r$. 
		This settles the first part of the claim.
		If $R_{2}\neq R_{1}$, then $R_{2}$ must contain $u_{c}$,
		again because $d(u)=r$.
		Moreover, as $u_{c} \in V(R_{2})$ and $\ell \ge 4$, 
		we cannot have $v \in R_{2}$.
		Together, these imply that 
		$V(R_{2}) = \{u_{c}\} \cup V(R_{1})\setminus \{v\}$.
		As $v$ is the only vertex in $V(R_{1})$ not contained in 
		$R_{2}$, it follows that $v \in C_{2}$.
		This settles the second part.
	\end{proof}

	Let $C_{2}$ be the cycle given by the claim above.
	If $\{u_{c},u,w\} \in V(C_{2})$, 
	then there exists 
	an $r$-clique $R_{2}\in \cE_{1}(\cH)$ such that 
	$E(R_2) \cap E(C_2)=\{u_{c},u\}$. 
	As $V(R_{2})\se N(u)\cup \{u\}\setminus\{w\}$,
	$R_{2}$ has no choice but to contain $v$.
	In particular, this implies that $v$ is a neighbour of 
	$u_{c}$, which gives us a contradiction, as $\ell \ge 4$.
	If $\{v,u,w\} \in V(C_{2})$, 
	then there exists an $r$-clique $R_{2}\in \cE_{1}(\cH)$ 
	such that 
	$E(R_2) \cap E(C_2)=\{u,v\}$. 
	As $V(R_{2})\se N(u)\cup \{u\}\setminus\{w\}$,
	$R_{2}$ has no choice but to contain $u_{c}$.
	In particular, this implies again that $v$ is a neighbour 
	of $u_{c}$, which gives us a contradiction.
	This proves item $(1)$.

	To show item (2), we consider two cases: 
	(i) either $d_{B}(v)=d(v)$, or (ii) $d(v)>d_{B}(v)$.
	In the first case,~\thref{degr} gives us 
	$d_{B}(v)=d(v)\ge r$. 
	In the second case, there is a vertex 
	$u \in N(v)\cap A$ and, since $\cH$ is $\star$-critical, 
	there is also an $r$-clique $R$ in $\graph{\cH}$ such that 
	$\{u,v\} \in E(R)$. 
	As $A$ is an independent set, we must have 
	$V(R)\setminus \{u\} \subseteq B$, 
	which implies that $v$ has least 
	$r-2$ neighbours in $B$. 
\end{proof}

Let $m(G)=e(G)/v(G)$ be the edge density of $G$.
Now, we are ready to prove~\thref{dens-critic}.

\begin{proof}[Proof of~\thref{dens-critic}]
	To simplify the notation, set $G = \graph{\cH}$.
	Observe that $\lambda(G) \le -\eps$ if and only if 
	$\mm^{-1}-m(G)^{-1}\ge \eps e(G)^{-1}$.
	The last inequality holds for some $\eps=\eps(r,\ell)>0$ if 
	there exists $\delta=\delta(r,\ell)>0$ such that 
	$m(G)>\mm+\delta$.
	Thus, we can reduce our problem to finding such $\delta$.
	In order to do so, we shall first bound $e(G)$.

	By~\thref{degr}, the set $V(G)$ can be partitioned into 
	$V(G)=A\cup B$, 
	where $A=A(G)$ and $B=B(G)$ were defined in~\eqref{AandB}.
	Thus, we can write
	\begin{align*}
		2e(G)= \sum \limits_{v \in A} d(v) + 
		\sum \limits_{v \in B} d(v).
	\end{align*}	
	As $d(v)=r$ for all $v \in A$, we have 
	$\sum_{v \in A} d(v)=r|A|$.
	Now, to bound the sum $S=\sum_{v \in B}d(v)$, 
	observe that this sum counts twice each edge inside the set 
	$B$ and counts once each edge across $A$ and $B$.
	By~\thref{inde-critic}$(1)$, we have $e(A,B)=r|A|$, 
	as $A$ is an independent set and $d(v)=r$ for each $v \in A$.
	By~\thref{inde-critic}$(2)$, $d_{B}(v)\ge r-2$ 
	for each $v \in B$.
	Together, these imply that 
	\begin{align*}
		S \ge r|A|+(r-2)|B|.
	\end{align*}	
	Furthermore, 
	\begin{align*}
		S \ge (r+1)|B|,
	\end{align*} 
	as $d(v)\ge r+1$ for each $v \in B$.
	Therefore, 
	\begin{align*}
		2e(G)\ge  
		r |A|+\max \Big\{ r|A|+(r-2)|B|, (r+1)|B| \Big\}.
	\end{align*}
	As $v(G)=|A|+|B|$, we have
	\begin{align*}
		2m(G) & \ge  \max \left\{ 
		\dfrac{2r|A|+(r-2)|B|}{|A|+|B|}, 
		\dfrac{r|A|+(r+1)|B|}{|A|+|B|} \right\} \\
		& = r-2+ \max \big\{ (r+2)x, 3-x \big\},
	\end{align*}
	where $x=|A|/v(G)$. 
	The last expression attains its minimum value when 
	$x=3/(r+3)$, and hence
	\begin{align*}
		m(G) \ge \dfrac{r+1}{2}- \dfrac{3}{2(r+3)}.
	\end{align*}

	A straightforward calculation shows that 
	$m_{2}(K_{r},C_{\ell})=
	\frac{\binom{r}{2}(\ell-1)}{(r-1)(\ell-1)-1}$
	(see~\thref{identitiesform2}).
	From this expression, we can see that 
	$\ell \mapsto \mm $ is decreasing.
	Thus, in order to conclude our proof, it suffices to show 
	that
	\[\dfrac{r+1}{2}- \dfrac{3}{2(r+3)}>m_{2}(K_{r},C_{4}).\]
	Using again the expression we have for $\mm$, an easy 
	calculation shows that the last inequality holds for every 
	$r \ge 4$. This completes the proof of the lemma.
\end{proof}

\section{The Algorithms}\label{sec:algos}

In this section, we formally describe the algorithm \hypertree\ 
and its subroutine \flower, and prove~\thref{hypertreeBasics}.
Let $n, r,\ell \ge 4$ be fixed integers throughout this section.

First, let us recall some notation from Section~\ref{proofofmain}.
Given any graph $G$, $\Crit_{r,\ell}(G)$ denotes the set of all 
$\star$-critical subhypergraphs of $\cG_{r,\ell}(G)$, 
the hypergraph whose hyperedges correspond to the copies of 
$K_{r}$ and $C_{\ell}$ on $G$.
For any $\mathcal{H}\subseteq \cG_{r,\ell}(G)$, we denote 
$\cE_1(\cH)=E(\cH) \cap \cE_{1}^{r}(\cG)$ and 
$\cE_2(\cH)=E(\cH) \cap \cE_{2}^{\ell}(\cG)$, where
\begin{align*}
	\cE_1^{r}(\cG) &= \{ E(F): F \cong K_r, F \subseteq G \} 
	\text{ and } 
	\cE_2^{\ell}(\cG) = 
	\{ E(F): F \cong C_{\ell}, F \subseteq G \}.
\end{align*}
The underlying graph of $\mathcal{H}$ is denoted by 
$\graph{\mathcal{H}}.$

We find it instructive to first provide an informal overview of 
\hypertree. 
This algorithm takes a hypergraph 
$\cH \in \Crit_{r,\ell}(G)$ as input, 
for some graph $G$ on $n$ vertices,
builds a sequence $(\cH_{i})_{i=0}^{T}$ of 
subhypergraphs of $\cH$ 
and outputs $\cH_{T}$.
The algorithm seeks to find a subhypergraph $\cF\se\cH$ for which 
the following holds.
The graph $F=\graph{\cF}$, which is a subgraph of $G$, 
satisfies (1) $\lambda(F)\le -\eps$
or (2) $\lambda (F) \le M$ and $e(F)\ge \log n$, 
for  some positive constants 
$\eps=\eps(r,\ell)$ and $M=M(r,\ell)$.

In the first step, the algorithm picks a hyperedge 
$E_{0} \in \cE_{1}(\cH)$ and sets $\cH_{0}=\{E_{0}\}$.
In the $i$-th step, it attaches a hyperedge 
$E_{i} \in \cE_{1}(\cH)$ 
to the current hypergraph $\cH_{i-1}$ to build $\cH_{i}$.
It is required that such a copy intersects $\graph{\cH_{i-1}}$ 
in at least two vertices, but is not a subgraph of 
$\graph{\cH_{i-1}}$. 
If no such hyperedge exists, 
then the algorithm runs a subroutine which we call \flower.
This algorithm, when called within \hypertree, returns 
(1) a hyperedge $C \in \cE_{2}(\cH)$
which intersects $\cH_{i-1}$ in at least one hypervertex and it 
is not contained in $\cH_{i-1}$; 
and (2) a collection of hyperedges in $\cE_{1}(\cH)$ so that each 
intersect $C$ in exactly one hypervertex.
The output of \flower\ is attached to $\cH_{i-1}$ to build 
$\cH_{i}$.
We defer the exact description of \flower\ until 
after the description of \hypertree.

The algorithm \hypertree\ also uses a 
\textit{canonical labelling function} to guarantee that the 
number of non-isomorphic underlying graphs given by output 
hypergraphs is not very large (cf.~\thref{lem:boundOut}).
Define 
$\sigma_{0}: E(K_{r})\to \big\{1,2,\ldots,\binom{r}{2}\big\}$ 
to be a fixed labelling of the edges of $K_{r}$.
For each sequence of underlying graphs
$(\graph{\cH_{0}},\ldots,\graph{\cH_{t}})$ generated by
\hypertree\ at step $t$,
we define its associated sequence of labellings 
$(\sigma_{i})_{i=0}^{t}$ in the following recursive way.
Given $\sigma_{i-1}: E(\graph{\cH_{i-1}})\to \mathbb{N}$,
define $\sigma_{i}: E(\graph{\cH_{i}})\to \mathbb{N}$ to be a 
function which satisfies the following properties:
(1) $\sigma_{i}(e)=\sigma_{i-1}(e)$ for every 
$e \in E(\graph{\cH_{i-1}})$;
(2) $\sigma_{i}$ labels the edges in
$E(\graph{\cH_{i}})\setminus E(\graph{\cH_{i-1}})$
with the next natural numbers larger than
$e(\graph{\cH_{i-1}})$; and
(3) outside $E(\graph{\cH_{i-1}})$, $\sigma_{i}$ 
is determined by the unlabelled graph generated by the edges in
$E(\graph{\cH_{i}})\setminus E(\graph{\cH_{i-1}})$
and by the set of vertices in $\graph{\cH_{i-1}}$ that
meet these edges.
We say that $\sigma_{i}$ is the \textit{canonical extension} of 
$\sigma_{i-1}$.
For each $i$, note that $\sigma_{i}$ is also an ordering of 
$V(\cH_{i})$.

A hyperedge $E$ always corresponds to a set of edges of some 
underlying graph, 
and so we denote by $V(E)$ the set $\cup_{e \in E} \ e$.
Next, is the formal description of the \hypertree\ algorithm.
Recall that $\eps=\eps(r,\ell)$ is the small positive constant given by \thref{dens-critic}.

\smallskip

\begin{algorithm}[H]
\DontPrintSemicolon
  
\KwInput{A hypergraph $\cH\in\Crit_{r,\ell}(G)$, 
for some graph $G$ on $n$ vertices}
	
\KwOutput{A pair $(\cH_T,D_T)$, where 
$\cH_T\se \cH$ and $D_T\subseteq \mathbb{N}$}

\tcc{Initialise:} 
  
\nl $i=1$, 
	$D_0 = \emptyset$, 
	$\cH_0 = \{E_0\}$ for some $E_0 \in \cE_1(\cH)$
	{\label{algH:initialise}}\;
	Let 
	$\sigma_{0}: E(V(\cH_{0}))\to \mathbb{N}$ be 
	equal to the canonical labelling of $K_{r}$\;

\nl \While{
			$\lambda(\graph{\cH_{0}}) > -\eps$ and 
			$i< \log n$
			}			
			{\label{algH:while}
           
			\nl \If{
						there exists $E\in \cE_1(\cH)$ 
						such that 
						$|V(E) \cap V(\graph{\cH_{i-1}})|\ge 2$ 
						and 
						$E\not\se V(\cH_{i-1})$
						\label{cliqueexistence}
					}							
					{\label{algHStep2If}
				 
						\nl set $\cH_{i} = \cH_{i-1}\cup\{E\}$
						and
						$D_{i} =D_{i-1}\cup\{i\}$ 
						{\label{algH:add1}}\; 
					}
			\Else{
					\nl let $\cH_F$ be the output of 
					\flower$(\cH_{i-1},\cH,\sigma_{i-1})$
					{\label{algH:flowerPowerBegin}}\;

					\nl set $\cH_{i} = \cH_{i-1} \cup \cH_F$ 
						{\label{algH:add2}}\;
							
					\nl \lIf{
								$|V(\graph{\cH_{i}})\sm 
								V(\graph{\cH_{i-1}})| = (r-1)
								(\ell-1)-1$ 
								{\label{algH:perfectflower}}
							}
							{
								set $D_{i}=D_{i-1}$
							}
					\nl	\lElse{set $D_{i}=D_{i-1}\cup\{i\}$}
					{\label{algH:flowerPowerEnd}}\;
				}
			\nl Let $\sigma_{i} : V(\cH_i)\to \mathbb{N}$ 
			be the canonical extension of $\sigma_{i-1}$ to 
			$\cH_i.${\label{algH:sigma}}\;
			\nl $i \mapsto i+1$ {\label{algH:increase}}\;}
   			\nl \Return{
						$(\cH_{i},D_{i})$
						}{\label{algH:return}}
							
\caption{\hypertree\label{HR}}
\end{algorithm}

\smallskip

Let us now turn to the subroutine \flower. 
The input of \flower\ is a triple $(\cH_i,\cH,\sigma_i)$, 
where $\cH$ is a $\star$-critical subhypergraph of 
$\cG_{r,\ell}(G)$,  for some graph $G$,
$\cH_i\se \cH$ and  $\sigma_i : V(\cH_i)\to \mathbb{N}$ is 
an ordering of the vertices of $\cH_i$.
When called within \hypertree,
the output is a subhypergraph of $\cH$ called \textit{flower}.
For a hyperedge $C$ of type $2$ and hyperedges 
$P_1,\ldots, P_t$ of type $1$,
we call the hypergraph $\cH_F=\{C,P_1,\ldots,P_t\}$
a \emph{flower} if $t<\ell$ and $|C \cap P_{s}|=1$ for all 
$1 \le s \le t.$
The hyperedges $P_{1},\ldots,P_{t}$ are called \emph{petals}.
Observe that, in the `graph world', a flower corresponds to a 
copy of $C_{\ell}$ and $t$ copies of $K_r$ that intersect $C$ in 
exactly one edge (and possibly more vertices).

We next introduce some non-standard notation that we need in the 
algorithm description to ensure that the total number of non-
isomorphic hypergraphs produced by \hypertree\ is not too big. 
For each $i \in \mathbb{N}$, 
let $\mathscr{C}(i,\cH_{i})$ be the set of all $\star$-critical
hypergraphs which generate the hypergraph $\cH_{i}$ in the $i$-th
step of \hypertree\ and which enter \flower\ at step $i+1$.
Now, define
\begin{align*}
	\overline{\cH}_{i} = \bigcup_{\cH \in \mathscr{C}(i,\cH_{i})} 
	\big\{E \in \cH: E \se V(\cH_{i}) \big\}.
\end{align*}
Observe that $\graph{\cH_{i}} = \graph{\overline{\cH}_{i}}$.
Now we are ready to state the formal description of \flower.

\smallskip

\begin{algorithm}[H]
\DontPrintSemicolon
  
\KwInput{A triple $(\cH_{i-1},\cH,\sigma_{i-1})$, 
	where $\cH$ is a hypergraph in $\Crit_{r,\ell}(G)$, 
	for some graph $G$, $\cH_{i-1}\se \cH$, and $
	\sigma_{i-1}$ is an ordering of $V(\cH_{i-1})$}
		
	\KwOutput{A flower $\cH_F=\{C\}\cup\{P_e : e\in C\sm 
	V(\cH_{i-1})\}$, 
	where $C \in \cE_{2}(\cH)$, 
	$P_{e}\in \cE_{1}(\cH)$ for all $e\in C\sm V(\cH_{i-1})$}
	\tcc{Find a seed:}
		 
	\nl Let $e_0 \in E(\graph{\cH_{i-1}})$ be the smallest 
	edge under the labelling $\sigma_{i-1}$ for which 
	$e_{0} \notin C$ for all 
	$C \in \cE_{2}(\overline{\cH}_{i-1})$\label{algF:seed}\;

	\nl let $C \in \cE_{2}(\cH)$ be a hyperedge containing 
	$e_{0}$ such that $C \not \se V(\cH_{i-1})$ 
	\label{algF:cycle}\;
		
 	\For{every $e\in C\sm V(\cH_{i-1})$}{
	\nl let $P_{e}\in \cE_1(\cH)$ be such that 
	$C\cap P_{e} = \{e\}$\label{algF:petalExistence}\;
 								   }
\Return{$\{C\}\cup\{P_e : e\in C\sm V(\cH_{i-1})\}$}
  
\caption{\flower\label{algF}}
\end{algorithm} 

\smallskip

{\setstretch{1.13}
In general, the algorithm \flower\ may return an error,
since an edge $e_{0}$ as in line~\ref{algF:seed} or a cycle $C$ 
as in line~\ref{algF:cycle} may not exist. 
However, such concerns are void when \flower\ is called within 
\hypertree, as the next lemma shows.

\begin{lemma}\thlabel{existenceofe0}
	Let $G$ be a graph on $n$ vertices and 
	$\cH\in\Crit_{r,\ell}(G)$. 
	Suppose that the algorithm \flower\ is called 
	in the $i$-th step of \hypertree$(\cH)$. 
	Then, the edge $e_{0}$ in line~\ref{algF:seed}, 
	the cycle $C$ in line~\ref{algF:cycle} and 
	the clique $P_e$ in line~\ref{algF:petalExistence} of 
	$\flower(\cH_{i-1},\cH,\sigma_{i-1})$ exist. 
	In particular, the algorithm runs without errors and 
	finishes in finite time.
	Moreover, $e_{0}$ is uniquely determined by $\cH_{i-1}$ and 
	$\sigma_{i-1}$.
\end{lemma}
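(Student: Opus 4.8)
The plan is to verify each of the three existence claims in turn, and then argue uniqueness of $e_0$, all under the standing assumption that \flower\ was invoked at step $i$ of \hypertree$(\cH)$, which means that the \texttt{if} condition in line~\ref{cliqueexistence} failed: there is \emph{no} $E\in\cE_1(\cH)$ with $|V(E)\cap V(\graph{\cH_{i-1}})|\ge 2$ and $E\not\se V(\cH_{i-1})$. Equivalently, every hyperedge of type $1$ that meets $\graph{\cH_{i-1}}$ in two or more vertices is already entirely contained in $V(\cH_{i-1})$. This is the structural fact I will lean on throughout.

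First I would establish the existence of the seed edge $e_0$ in line~\ref{algF:seed}. This asks for an edge $e_0\in E(\graph{\cH_{i-1}})$ that is not covered by any $\ell$-cycle hyperedge of $\overline{\cH}_{i-1}$. Suppose for contradiction that every edge of $\graph{\cH_{i-1}}$ lies in some $C\in\cE_2(\overline{\cH}_{i-1})$. Since $\graph{\cH_{i-1}}=\graph{\overline{\cH}_{i-1}}$, this says $\overline{\cH}_{i-1}$ (restricted to hyperedges inside $V(\cH_{i-1})$) already has the first $\star$-critical property; combined with the fact noted after~\thref{starcritical} and the failure of the line~\ref{cliqueexistence} condition, I would derive that $\overline{\cH}_{i-1}$ restricted to $V(\cH_{i-1})$ is itself $\star$-critical — indeed for each such cycle $C$ and each $e\in C$, $\star$-criticality of $\cH$ gives a clique $K\in\cE_1(\cH)$ with $K\cap C=\{e\}$, and since $K$ meets $\graph{\cH_{i-1}}$ in the two vertices of $e$, the line~\ref{cliqueexistence} failure forces $K\se V(\cH_{i-1})$. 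Then $\graph{\cH_{i-1}}$ would be the underlying graph of a $\star$-critical hypergraph, so by~\thref{dens-critic} we'd have $\lambda(\graph{\cH_{i-1}})\le-\eps$. But \hypertree\ only enters the loop body (and hence \flower) when the \texttt{while} guard $\lambda(\graph{\cH_0})>-\eps$ holds — here I need to be careful: the guard as written refers to $\cH_0$, but the intended reading (consistent with~\thref{hypertreeBasics}\ref{hypertreeBasics-stopping}) is $\lambda(\graph{\cH_{i-1}})>-\eps$, giving the desired contradiction. So $e_0$ exists.

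Next, the cycle $C$ in line~\ref{algF:cycle}: we need a hyperedge $C\in\cE_2(\cH)$ with $e_0\in C$ and $C\not\se V(\cH_{i-1})$. By the first $\star$-critical property of $\cH$, every vertex (i.e.\ every edge of $\graph{\cH}$) — in particular $e_0$ — lies in \emph{some} $C\in\cE_2(\cH)$. If every such $C$ satisfied $C\se V(\cH_{i-1})$, then $C\in\cE_2(\overline{\cH}_{i-1})$ by definition of $\overline{\cH}_{i-1}$ (since $\cH\in\mathscr{C}(i-1,\cH_{i-1})$ because $\cH$ entered \flower\ at this step), contradicting the defining property of $e_0$. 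Hence some valid $C$ exists. Then, for the petals: given $e\in C\sm V(\cH_{i-1})$, the second $\star$-critical property of $\cH$ applied to the pair $(C,e)$ directly yields a clique $P_e\in\cE_1(\cH)$ with $C\cap P_e=\{e\}$, as required. Since $\cH$ is finite and each line makes a bounded choice, \flower\ terminates without error. Finally, uniqueness of $e_0$ is immediate: line~\ref{algF:seed} specifies $e_0$ as the \emph{smallest} edge under the total order $\sigma_{i-1}$ satisfying an explicit property, and both $\sigma_{i-1}$ and $\overline{\cH}_{i-1}$ (which depends only on $\cH_{i-1}$ through $\mathscr{C}(i-1,\cH_{i-1})$) are determined by $\cH_{i-1}$ and $\sigma_{i-1}$.

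The main obstacle I anticipate is the first step — pinning down exactly why $\overline{\cH}_{i-1}$ restricted to $V(\cH_{i-1})$ being $\star$-critical forces $\lambda(\graph{\cH_{i-1}})\le-\eps$, and reconciling the \texttt{while}-guard notation; everything downstream is a direct unwinding of~\thref{starcritical} plus the line~\ref{cliqueexistence} dichotomy. A secondary subtlety is confirming $\cH\in\mathscr{C}(i-1,\cH_{i-1})$ so that $\overline{\cH}_{i-1}$ genuinely captures all the relevant cycles through $e_0$; this holds by construction, since $\mathscr{C}(i-1,\cH_{i-1})$ is precisely the set of $\star$-critical hypergraphs generating $\cH_{i-1}$ at step $i-1$ and entering \flower\ at step $i$, which is the hypothesis of the lemma.
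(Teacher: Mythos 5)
Your argument is correct and matches the paper's proof essentially step for step: both show that if no seed edge existed then $\overline{\cH}_{i-1}$ would be $\star$-critical (using the failure of the condition in line~\ref{cliqueexistence} to force the witnessing cliques to lie inside $V(\cH_{i-1})$), whence $\lambda(\graph{\cH_{i-1}})\le-\eps$ by~\thref{dens-critic}, contradicting the while-guard; and both then obtain $C$ and the petals directly from the $\star$-criticality of $\cH$, with uniqueness of $e_0$ coming from the minimality under $\sigma_{i-1}$. The one imprecision is that a cycle $C\in\cE_2(\overline{\cH}_{i-1})$ need not belong to $\cH$ itself — it may originate from a different $\cF\in\mathscr{C}(i-1,\cH_{i-1})$ — so the clique $K$ with $K\cap C=\{e\}$ must be supplied by the $\star$-criticality of that $\cF$ (which also entered \flower\ at step $i$, so $K\se V(\cH_{i-1})$ still follows), which is exactly how the paper phrases this step.
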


\begin{proof}
	First, let us show the existence of $e_{0}$.
	We claim that $\overline{\cH}_{i-1}$ 
	cannot be $\star$-critical. 
	Otherwise, $\lambda(\graph{\cH_{i-1}})\le -\eps$,
	by~\thref{dens-critic},
	and this would imply that
	\hypertree$(\cH)$ has not entered the while loop in the 
	$i$-th step.
	In particular, \flower\ would not be called in the $i$-th
	step of \hypertree$(\cH)$.
	Now, observe that for every 
	$C \in \cE_{2}(\overline{\cH}_{i-1})$
	and every $e \in C$, there exists 
	$K \in \cE_{1}(\overline{\cH}_{i-1})$ 
	such that $K \cap C = \{e\}$.
	In fact, if $C \in \cE_{2}(\overline{\cH}_{i-1})$, 
	then there exists a hypergraph 
	$\cF \in \mathscr{C}(i-1,\cH_{i-1})$
	such that $C \in \cE_{2}(\cF)$. 
	As $\cF$ is $\star$-critical, there must also exist 
	$K \in \cE_{1}(\cF)$ such that $K \cap C = \{e\}$.
	But, because \hypertree$(\cF)$ has entered the 
	else-statement at step $i$, 
	we have $K \se V(\cH_{i-1})$, and hence
	$K \in \cE_{1}(\overline{\cH}_{i-1})$.
	It follows that the only reason for which 
	$\overline{\cH}_{i-1}$
	is not $\star$-critical is because there exists an edge 
	$e \in E(\graph{\cH_{i-1}})$
	for which there is no hyperedge in 
	$\cE_{2}(\overline{\cH}_{i-1})$ containing $e$.
	Among these edges, we take $e_{0}$ to be the one with 
	the smallest labelling under $\sigma_{i-1}$.
	By construction, $e_{0}$ only depends on $\cH_{i-1}$ and 
	$\sigma_{i-1}$.
	As $\cH$ is $\star$-critical, 
	the existence of the cycle $C$ as in line~\ref{algF:cycle} 
	and the petals $P_{e}$ as in line~\ref{algF:petalExistence} 
	is straightforward.
\end{proof}
}

\begin{lemma}\thlabel{lem:preludeflowerCorrectness}
	Under the same assumptions as in~\thref{existenceofe0},
	$\flower(\cH_{i-1},\cH,\sigma_{i-1})$ output 
	a flower $\cH_F=\{C\}\cup\{P_e : e\in C\sm V(\cH_{i-1})\}$ 
	which satisfies the following properties:
	\begin{enumerate}
	\item[\mylabel{CnotincH_i}{$(\textit{F1})$}] 
	$C\in \cE_2(\cH)$ but $C\not\se V(\cH_{i-1}),$
		
	\item[\mylabel{petalsandcycleintersection}{$(\textit{F2})$}] 
	$P_e\in \cE_1(\cH)$ and $C\cap P_e =\{e\}$ 
	for every $e\in C\sm V(\cH_{i-1}),$ and
		
	\item[\mylabel{intersectionwithG_i}{$(\textit{F3})$}]
	$|V(P_{e})\cap V(\graph{\cH_{i-1}})| \le 1 \le 
	|C \cap E(\graph{\cH_{i-1}})|$ 
	for all $e\in C\sm V(\cH_{i-1}).$
	\end{enumerate}
\end{lemma}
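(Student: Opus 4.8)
The plan is to verify properties (\textit{F1})--(\textit{F3}) directly from the description of \flower\ together with \thref{existenceofe0}. The existence of the seed edge $e_0$, the cycle $C$, and each petal $P_e$ is already guaranteed by \thref{existenceofe0}, so the content here is just to read off the stated set-theoretic relations from the lines of the algorithm and the definition of a flower.

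First I would establish (\textit{F1}). By line~\ref{algF:cycle} of \flower, the hyperedge $C$ is chosen in $\cE_2(\cH)$ with $C\not\se V(\cH_{i-1})$, which is exactly (\textit{F1}). For (\textit{F2}), line~\ref{algF:petalExistence} produces, for each $e\in C\sm V(\cH_{i-1})$, a hyperedge $P_e\in\cE_1(\cH)$ with $C\cap P_e=\{e\}$; this is literally the defining condition of a petal, so (\textit{F2}) holds and $\cH_F=\{C\}\cup\{P_e: e\in C\sm V(\cH_{i-1})\}$ is indeed a flower (with at most $\ell-1$ petals, since $|C\sm V(\cH_{i-1})|<\ell$ as $C$ meets $V(\cH_{i-1})$ in at least one vertex, as will follow from the seed argument below).

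The real work is in (\textit{F3}), and this is the step I expect to be the main obstacle since it is the only one not immediately transcribed from the pseudocode. For the right-hand inequality $1\le|C\cap E(\graph{\cH_{i-1}})|$: the seed edge $e_0$ is, by line~\ref{algF:seed} and the proof of \thref{existenceofe0}, an edge of $\graph{\cH_{i-1}}$ that lies on no hyperedge of $\cE_2(\overline{\cH}_{i-1})$, and $C$ is chosen in line~\ref{algF:cycle} to contain $e_0$; hence $e_0\in C\cap E(\graph{\cH_{i-1}})$, giving the bound. For the left-hand inequality $|V(P_e)\cap V(\graph{\cH_{i-1}})|\le 1$ for each $e\in C\sm V(\cH_{i-1})$: suppose $P_e$ met $\graph{\cH_{i-1}}$ in two or more vertices. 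Then $P_e$ is an $r$-clique in $G$ intersecting $V(\graph{\cH_{i-1}})$ in $\ge 2$ vertices; moreover $P_e\not\se V(\cH_{i-1})$, because $e\in P_e$ and $e\notin V(\cH_{i-1})$ (as $e\in C\sm V(\cH_{i-1})$, viewing $e$ as a hypervertex, i.e.\ an edge of $G$ not present in $\graph{\cH_{i-1}}$). But then $E:=P_e$ would satisfy the condition in line~\ref{cliqueexistence} of \hypertree, so \hypertree$(\cH)$ would have taken the \textbf{if}-branch at step $i$ rather than calling \flower\ --- contradicting the hypothesis that \flower\ was called at step $i$. Hence $|V(P_e)\cap V(\graph{\cH_{i-1}})|\le 1$, completing (\textit{F3}) and the lemma. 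I would be slightly careful about the bookkeeping between hypervertices of $\cH$ (which are edges of $G$) and vertices of the underlying graphs, since $V(P_e)$ denotes $\bigcup_{f\in P_e} f$ while $e$ itself is a single hypervertex; spelling this out cleanly is where most of the care lies.
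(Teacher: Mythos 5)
Your proof is correct and follows essentially the same route as the paper: (\textit{F1}) and (\textit{F2}) are read off from lines~\ref{algF:cycle} and~\ref{algF:petalExistence} of \flower, the lower bound in (\textit{F3}) comes from $e_0\in C\cap E(\graph{\cH_{i-1}})$, and the upper bound comes from the fact that line~\ref{cliqueexistence} of \hypertree\ was not executed. Your explicit check that $P_e\not\se V(\cH_{i-1})$ (needed for the contradiction with line~\ref{cliqueexistence}) is a detail the paper leaves implicit, but the argument is the same.
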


\begin{proof}
	The properties~\ref{CnotincH_i}
	and~\ref{petalsandcycleintersection} 
	are immediate from the algorithm description and 
	Lemma~\ref{existenceofe0}. 
	As \flower\ was called 
	in the $i$-th step of \hypertree$(\cH)$, 
	line~\ref{cliqueexistence} of \hypertree\ 
	was not executed.
	This means that for each petal $P_{e}$ we have 
	$|V(P_{e})\cap V(\graph{\cH_{i-1}})| \le 1$,
	which proves the first inequality 
	in~\ref{intersectionwithG_i}.
	The second inequality follows from the existence of $e_{0}$ 
	in line~\ref{algF:seed}, as 
	$e_{0} \in C \cap E(\graph{\cH_{i-1}})$.
\end{proof}

With Lemmas~\ref{existenceofe0} and~\ref{lem:preludeflowerCorrectness} at our hands, we now deduce \thref{hypertreeBasics}.

\begin{proof}[Proof of \thref{hypertreeBasics}]
	In its initialisation, \hypertree$(\cH)$ algorithm sets
	$D_0 = \emptyset$ and $\cH_0 = \{E_0\}$, for some 
	$E_0 \in \cE_1(\cH)$. 
	This already establishes Part \ref{hypertreeBasics-E0}.
	Now, for each step $i$ of the while loop, 
	\hypertree$(\cH)$ executes one of the following actions.
	Either it sets 
	$\cH_{i}=\cH_{i-1}\cup \{E\}$ for some $E \in \cE_{1}(\cH)$ 
	(Case 1, cf.~line~\ref{algH:add1}), 
	or it sets $\cH_{i}=\cH_{i-1}\cup \cH_{F}$, where
	$\cH_{F}$ is the output of the algorithm 
	\flower$(\cH_{i-1},\cH,\sigma_{i-1})$ 
	(Case 2, cf.~line~\ref{algH:add2}). 
	We remark that one of them must be executed because
	$\cH_{i-1}\notin \Crit_{r,\ell}(G)$,
	as $\lambda(\graph{\cH_{i-1}})>-\eps$ 
	(see~\thref{dens-critic}).
	In either case, $\cH_{i-1} \subseteq \cH_{i} \subseteq \cH$
	(cf.~\thref{lem:preludeflowerCorrectness} 
	for the second case).
	Similarly, it is easy to see from 
	lines~\ref{algH:add1},~\ref{algH:perfectflower} 
	and~\ref{algH:flowerPowerEnd} that 
	$D_{i-1} \subseteq D_{i} \subseteq \mathbb{N}$.

	Let $T$ be the number of iterations of the while loop in
	line~\ref{algH:while} of \hypertree$(\cH)$.  
	By the while loop condition and 
	the increase of $i$ by one in every iteration 
	(see line~\ref{algH:increase}), 
	\hypertree$(\cH)$ must stop in at most $\log n$ iterations.
	Moreover, the while loop guarantees that $T$ is the smallest 
	integer such that $\lambda(G_T)\le -\eps$ or $T \ge \log n$, 
	where $G_T=\graph{\mathcal{H}_T}$ and $\eps$ 
	is the constant given by~\thref{dens-critic}.
	This establishes Part~\ref{hypertreeBasics-stopping}.
	Since \hypertree$(\cH)$ should return $(\cH_{T},D_{T})$ 
	(see line~\ref{algH:return}), we also establish
	Part~\ref{hypertreeBasics-return}.

	Now, it remains to show Part~\ref{hypertreeBasics-vertices}.
	Let $i$ be any step of the while loop in \hypertree$(\cH)$.
	In Case 1, the hyperedge $E$ satisfies 
	$E\not\se V(\cH_{i-1})$ (cf.~line~\ref{algH:add1}).
	In Case 2,~\thref{lem:preludeflowerCorrectness} implies that
	the output $\cH_{F}=\{C,P_{1},\ldots,P_{t}\}$ given by
	\flower$(\cH_{i-1},\cH,\sigma_{i-1})$ satisfies 
	$C \not \se V(\cH_{i-1})$.
	In both cases, $v(\cH_{i-1})<v(\cH_{i})$.
\end{proof}

\section{The algorithm analysis}\label{sec:analysis}
In this section, we prove \thref{deg-lambda,lem:boundOut},
and hence complete the proof of~\thref{thm:maintech}. 
The former states that in each step of the algorithm, $\lambda(G_i)$ either decreases by an additive constant in a degenerate step, or that its value remains the same. The latter then states that the number of non-isomorphic structures that the algorithm can generate is not too big.

Recall the description of \hypertree.
Throughout this section, let $G_{i}$ denote the graph 
$\graph{\cH_{i}}$, where $\cH_{i}$ is the hypergraph generated in $i$-th step of \hypertree.
For all $1 \le i \le T$, the graph $G_i$ is obtained from $G_{i-1}$ by adding either an $r$-clique or an underlying graph of a flower $\{C,P_1,\ldots,P_t\}$ to it, depending whether \hypertree\ executes the if-clause in lines 
\ref{algHStep2If}--\ref{algH:add1} or the else-clause in lines \ref{algH:flowerPowerBegin}--\ref{algH:flowerPowerEnd}. In the latter case, we will analyse the change in $\lambda$ by adding first $C$, and then one petal (copy of $K_r$) at a time.
Thus, it makes sense to pin down the effect of adding a copy of $K_r$ to an arbitrary graph $F$ first. 

For two graphs $F_1$ and $F_2$, we denote by $F_1\cap F_2$ the subgraph with vertex set $V(F_1)\cap V(F_2)$ and edge set $E(F_1)\cap E(F_2).$ The graph $F_1\cup F_2$ is defined analogously. 
For any graph $F$, recall that 
$\lambda(F) = v(F)-e(F)/m_2(K_r,C_{\ell})$.
Then, we can write 
\begin{align}\label{eq:lambdaDiff-0}
\lambda(F_1\cup F_2) - \lambda (F_1) &= v(F_1\cup F_2)-v(F_1) - \frac{e(F_1\cup F_2)-e(F_1)}{m_2(K_r,C_{\ell})} \nonumber \\
&= v(F_2)-v(F_1\cap F_2) - \frac{e(F_2)-e(F_1\cap F_2)}{m_2(K_r,C_{\ell})}.
\end{align}
Now, define
\begin{align}\label{def:beta}
\beta_{r,\ell}(J)&= r-v(J) - \frac{\binom{r}{2}-e(J)}{m_2(K_r,C_{\ell})}.
\end{align} 
By~\eqref{eq:lambdaDiff-0}, we have
\begin{align}\label{eq:lambdaDiff}
\lambda(F_1\cup F_2) - \lambda (F_1) &= \beta_{r,\ell}(J)
\end{align}
in the case when $F_2 \cong K_r$ and $J = F_1\cap F_2.$ 
Before stating the lemma which encompasses how 
$\beta_{r,\ell}(J)$ behaves for various subgraphs $J\se K_r$,
we need the following lemma which provides closed formulas for 
$m_{2}(C_{\ell})$, $m_{2}(K_{r})$ 
and $m_{2}(K_{r}, C_{\ell})$.
We prove it in the appendix. 

\begin{lemma}\thlabel{identitiesform2} 
	Let $r,\ell \ge 4$ be integers. Then,
	\begin{align*}
		m_2(C_{\ell}) = \dfrac{\ell-1}{\ell-2}, \quad 
		m_{2}(K_{r}) = \dfrac{r+1}{2}, \quad
		\text{and} \quad \mm = 
		\dfrac{\binom{r}{2}}{r-2+(\ell-2)/(\ell-1)}.
	\end{align*}
	In particular, $r/2 < \mm < m_{2}(K_{r})$.
\end{lemma}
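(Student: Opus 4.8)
The final statement to prove is Lemma~\ref{identitiesform2}, which asks for closed formulas for $m_2(C_\ell)$, $m_2(K_r)$, and $\mm = m_2(K_r, C_\ell)$, together with the chain of inequalities $r/2 < \mm < m_2(K_r)$.

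The plan is to compute each quantity directly from its defining maximum. For $m_2(C_\ell)$, recall $m_2(F) = \max\{(e(J)-1)/(v(J)-2) : J \subseteq F, v(J) \ge 3\}$. Since $C_\ell$ is $2$-regular and every proper subgraph is a disjoint union of paths, the densest subgraph is $C_\ell$ itself (a path on $k$ vertices gives ratio $(k-2)/(k-2) = 1$, while $C_\ell$ gives $(\ell-1)/(\ell-2) > 1$); a short monotonicity check confirms $C_\ell$ is the maximizer, yielding $m_2(C_\ell) = (\ell-1)/(\ell-2)$. For $m_2(K_r)$, every subgraph $K_s$ with $s \ge 3$ gives $(\binom{s}{2}-1)/(s-2) = (s+1)/2$ after simplifying $\binom{s}{2}-1 = (s-2)(s+1)/2$, which is increasing in $s$, so the maximum is at $s = r$, giving $m_2(K_r) = (r+1)/2$. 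For $\mm$, I would use the definition $m_2(F,H) = \max\{e(J)/(v(J)-2+1/m_2(H)) : J \subseteq F, e(J)\ge 1\}$ with $F = K_r$, $H = C_\ell$; substituting $1/m_2(C_\ell) = (\ell-2)/(\ell-1)$ and noting that among subgraphs $J = K_s$ the ratio $\binom{s}{2}/(s-2+(\ell-2)/(\ell-1))$ is again increasing in $s$ (a routine check, since numerator grows quadratically and denominator linearly), the maximizer is $K_r$, giving $\mm = \binom{r}{2}/(r-2+(\ell-2)/(\ell-1))$.

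For the concluding inequalities, I would argue as follows. Write $\mm = \binom{r}{2}/(r - 2 + \theta)$ where $\theta = (\ell-2)/(\ell-1) \in (1/2, 1)$ for $\ell \ge 4$. Since $\binom{r}{2} = r(r-1)/2$ and $r - 2 + \theta < r - 1$, we get $\mm > \frac{r(r-1)/2}{r-1} = r/2$. For the upper bound, $r - 2 + \theta > r - 2 + 1/2 = r - 3/2$ is not quite what is wanted; instead compare directly with $m_2(K_r) = (r+1)/2 = \binom{r}{2}/(r-1)$, so $\mm < m_2(K_r)$ is equivalent to $r - 2 + \theta > r - 1$, i.e. $\theta > 1$ — which is false. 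So I must be careful: actually $\mm < m_2(K_r)$ is equivalent to $r-2+\theta > r-1$, which fails, meaning I should double-check the direction. Rewriting: a larger denominator gives a smaller fraction, and $r-2+\theta < r-1$ since $\theta < 1$, so $\mm = \binom{r}{2}/(r-2+\theta) > \binom{r}{2}/(r-1) = m_2(K_r)$. That contradicts the claim, so the intended reading must be that $\mm$ uses $v(J) - 2 + 1/m_2(H)$ with the maximum possibly not at $K_r$ — but the stated formula has denominator $r - 2 + \theta$, so the claim $\mm < m_2(K_r)$ as literally stated would require rechecking; I would resolve this by carefully tracking that $1/m_2(C_\ell) = (\ell-2)/(\ell-1) < 1$ makes the denominator $r-2+1/m_2(C_\ell)$ strictly between $r-2$ and $r-1$, hence $\mm$ lies strictly between $\binom{r}{2}/(r-1) = (r+1)/2$ and $\binom{r}{2}/(r-2)$, so in fact $\mm > m_2(K_r)$ — suggesting the inequality in the lemma should be read with the roles as $m_2(K_r) < \mm$, or that I have mis-substituted; the safest route is to recompute $\theta$ and present whichever direction the arithmetic actually supports.

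The main obstacle, then, is not the computation of the formulas (each is a one-line simplification of binomial coefficients followed by a monotonicity-in-$s$ check that $K_r$ or $C_\ell$ is the extremal subgraph) but rather getting the final inequality chain oriented correctly and pinning down exactly which comparison holds; this requires being scrupulous about the value $1/m_2(C_\ell)$ and the direction in which enlarging a denominator moves a fraction. I would therefore devote most of the written proof to (i) the three simplifications, each with its brief extremality argument, and (ii) a careful final paragraph bounding $r - 2 + 1/m_2(C_\ell)$ between $r-2$ and $r-1$ and reading off the resulting two-sided bound on $\mm$, matching it to the stated inequality $r/2 < \mm < m_2(K_r)$ after correctly resolving the orientation.
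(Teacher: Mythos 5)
Your computations of the three closed formulas and of the lower bound $\mm > r/2$ are correct and essentially match the paper's argument (forests for $C_\ell$, $\bigl(\binom{s}{2}-1\bigr)/(s-2)=(s+1)/2$ for $K_r$, monotonicity in $s$ for $\mm$, and $r-2+\theta<r-1$ for the lower bound). However, your treatment of the upper bound $\mm < m_2(K_r)$ contains a genuine error that derails the rest: you write $m_2(K_r)=(r+1)/2=\binom{r}{2}/(r-1)$, but $\binom{r}{2}/(r-1)=r/2$, not $(r+1)/2$. This false identity is what leads you to "conclude" tentatively that $\mm > m_2(K_r)$ and to doubt the lemma. Moreover, your fallback plan --- sandwiching the denominator between $r-2$ and $r-1$ to get $r/2 < \mm < \binom{r}{2}/(r-2)$ --- cannot work, because $\binom{r}{2}/(r-2)=\frac{r(r-1)}{2(r-2)} > \frac{r+1}{2}$ (cross-multiplying, $r(r-1) > (r+1)(r-2)$), so that crude upper bound is weaker than what is claimed.

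The correct argument must use the actual value of $\theta=(\ell-2)/(\ell-1)$, not just $\theta<1$. Writing $\mm=\binom{r}{2}/(r-2+\theta)$, the inequality $\mm<(r+1)/2$ is equivalent to $r(r-1)<(r+1)(r-2+\theta)$, i.e.\ to $\theta>2/(r+1)$; since $\theta\ge 2/3$ for $\ell\ge 4$ and $2/(r+1)\le 2/5$ for $r\ge 4$, this holds with room to spare. The paper reaches the same conclusion by noting that $\mm$ is decreasing in $\ell$ and checking $m_2(K_r,C_3)=\frac{r(r-1)}{2r-3}<\frac{r+1}{2}$ for $r\ge 4$. As written, your proposal does not establish the stated inequality and in fact expresses doubt about its truth, so this step is a gap you would need to repair.
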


In our next lemma, 
we obtain upper bounds for $\beta_{r,\ell}(J)$ 
for every subgraph $J\subsetneq K_r$ with at least two vertices.

\begin{lemma}\thlabel{claim:beta}
	Let $r,\ell \ge 4$ be integers.
	Let $J\subsetneq K_r$ such that $v(J)\ge 2$. Then,
	\begin{enumerate}
	\item[\mylabel{beta:less0}{$(a)$}]
	$\beta_{r,\ell}(J) < 0$,
 
	\item[\mylabel{beta:K2}{$(b)$}]
	$\beta_{r,\ell}(K_2) = 
	1/m_2(K_r,C_{\ell})- (\ell-2)/(\ell-1)> -1$,

	\item[\mylabel{beta:lessK2}{$(c)$}]
	$\beta_{r,\ell}(J) \le \beta_{r,\ell} (K_2)$ if 
	$d(v) = 1$ for some $v \in V(J)$. 
	The equality holds if and only if $J\cong K_2$.
\end{enumerate}
\end{lemma}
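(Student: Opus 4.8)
The plan is to work directly from the closed formula for $\mm$ given in \thref{identitiesform2}, namely $\mm = \binom{r}{2}\big/(r-2+(\ell-2)/(\ell-1))$, together with the bounds $r/2 < \mm < m_2(K_r) = (r+1)/2$ recorded there. For part \ref{beta:K2}, I would simply substitute $J = K_2$ into the definition \eqref{def:beta}: since $v(K_2)=2$ and $e(K_2)=1$, we get $\beta_{r,\ell}(K_2) = r-2 - (\binom{r}{2}-1)/\mm$, and plugging in the formula for $\mm$ and simplifying collapses the $r-2$ terms, leaving exactly $1/\mm - (\ell-2)/(\ell-1)$. The inequality $\beta_{r,\ell}(K_2) > -1$ is then equivalent to $1/\mm > (\ell-2)/(\ell-1) - 1 = -1/(\ell-1)$, which is immediate since the left-hand side is positive; one could even note the sharper statement that $\beta_{r,\ell}(K_2)$ lies in $(-1,0)$, the upper bound $\beta_{r,\ell}(K_2) < 0$ following from $1/\mm < (\ell-2)/(\ell-1)$ (equivalently $\mm > (\ell-1)/(\ell-2) = m_2(C_\ell)$, which holds for $r \ge 4$, $\ell \ge 4$ by a direct comparison of the formulas).

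For part \ref{beta:less0}, the idea is to view $\beta_{r,\ell}$ as a linear functional in the pair $(v(J), e(J))$ and reduce to a finite check. Write $\beta_{r,\ell}(J) = (r - v(J)) - (\binom{r}{2} - e(J))/\mm$; adding an edge to $J$ (keeping the vertex set fixed) increases $\beta_{r,\ell}(J)$ by $1/\mm$, and adding an isolated vertex decreases it by $1$. Hence among all proper subgraphs $J \subsetneq K_r$ with $v(J) \ge 2$, the maximum of $\beta_{r,\ell}$ is attained by a graph with as many edges as possible for its vertex count, i.e.\ by a clique $K_s$ with $2 \le s \le r-1$ (a non-complete graph on $s$ vertices has strictly fewer edges than $K_s$, hence strictly smaller $\beta$, and if $J$ has an isolated vertex one deletes it to strictly increase $\beta$ while staying a proper subgraph — here I use $v(J)\ge 2$ so that after deletion we still have at least one vertex, and if deletion would bring us to a single vertex we instead compare with $K_2$). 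So it suffices to show $\beta_{r,\ell}(K_s) < 0$ for every $2 \le s \le r-1$. Now $\beta_{r,\ell}(K_s) = (r-s) - (\binom r2 - \binom s2)/\mm < (r-s) - (\binom r2 - \binom s2)\cdot 2/(r+1)$ using $\mm < (r+1)/2$; and $\binom r2 - \binom s2 = (r-s)(r+s-1)/2$, so the bracket becomes $(r-s)\big(1 - (r+s-1)/(r+1)\big) = (r-s)\cdot (2-s)/(r+1)$, which is $\le 0$ for $s \ge 2$ and strictly negative for $s \ge 3$. For the remaining case $s = 2$ the crude bound $\mm < (r+1)/2$ is not enough, but this is precisely part \ref{beta:K2}, which gives $\beta_{r,\ell}(K_2) < 0$ via $\mm > m_2(C_\ell)$ as above. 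This handles \ref{beta:less0}.

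For part \ref{beta:lessK2}, suppose $J$ has a vertex $v$ of degree $1$ in $J$. Deleting $v$ removes one vertex and one edge, so $\beta_{r,\ell}(J - v) = \beta_{r,\ell}(J) + 1 - 1/\mm = \beta_{r,\ell}(J) + (\mm-1)/\mm$. Since $\mm > r/2 \ge 2 > 1$, we have $(\mm - 1)/\mm > 0$, so $\beta_{r,\ell}(J) < \beta_{r,\ell}(J-v)$; iterating the monotonicity argument from part \ref{beta:less0} (deleting isolated vertices and adding missing edges only increases $\beta$) we get $\beta_{r,\ell}(J - v) \le \beta_{r,\ell}(K_2)$ provided $J - v$ has at least two vertices, and hence $\beta_{r,\ell}(J) < \beta_{r,\ell}(K_2)$ strictly. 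If instead $v(J) = 2$, then the degree-$1$ hypothesis forces $J \cong K_2$ and equality holds. The only subtle point is the equality characterisation: one must check that equality $\beta_{r,\ell}(J) = \beta_{r,\ell}(K_2)$ forces $J \cong K_2$, which follows because every reduction step above (edge addition, isolated-vertex deletion, and the degree-$1$-vertex deletion) changes $\beta$ by a strictly nonzero amount, so any $J \not\cong K_2$ with a degree-$1$ vertex gives a strict inequality. I do not expect a serious obstacle here; the main thing to be careful about is bookkeeping in the reduction argument — ensuring that at each step we remain a \emph{proper} subgraph of $K_r$ with at least two vertices so that the target $K_2$ (rather than $K_r$ itself or a single vertex) is the extremal case, which is exactly what the hypotheses $J \subsetneq K_r$ and $v(J) \ge 2$ guarantee.
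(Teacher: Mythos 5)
Your parts (a) and (b) are essentially correct, and your route for (a) (reduce to cliques $K_s$ by filling in edges and deleting isolated vertices, then check $\beta_{r,\ell}(K_s)<0$) is a genuine alternative to the paper's direct comparison of $(e(J)-1)/(v(J)-2)$ with $m_2(K_r)$. Two minor remarks there: your reduction to ``$K_s$ with $2\le s\le r-1$'' silently omits spanning proper subgraphs with $v(J)=r$, for which $\beta_{r,\ell}(J)=-(\binom{r}{2}-e(J))/\mm<0$ is immediate; and your worry that the bound $\mm<(r+1)/2$ ``is not enough'' for $s=2$ is unfounded, since that inequality is strict and your own computation already gives $\beta_{r,\ell}(K_2)<(r-2)(2-2)/(r+1)=0$ strictly.

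The genuine gap is in part (c), at the step ``iterating the monotonicity argument we get $\beta_{r,\ell}(J-v)\le\beta_{r,\ell}(K_2)$.'' The operations you invoke (adding edges, deleting isolated vertices, deleting degree-$1$ vertices) all \emph{increase} $\beta_{r,\ell}$, so they only bound $\beta_{r,\ell}(J-v)$ from above by $\beta_{r,\ell}(K_s)$, where $s$ is the number of non-isolated vertices of $J-v$ and may be as large as $r-1$. They do not take you from $K_s$ down to $K_2$: that requires deleting a vertex of degree $s-1$, which changes $\beta_{r,\ell}$ by $1-(s-1)/\mm$, and this is \emph{negative} once $s-1>\mm>r/2$. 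Indeed $s\mapsto\beta_{r,\ell}(K_s)$ is not monotone; for $r=5$, $\ell=4$ one has $\mm=30/11$, $\beta_{5,4}(K_3)=-17/30$ and $\beta_{5,4}(K_4)=-14/30>\beta_{5,4}(K_3)$. The comparison you need, $\beta_{r,\ell}(K_s)\le\beta_{r,\ell}(K_2)$ for $2\le s\le r-1$, is true but needs its own argument: the increments $\beta_{r,\ell}(K_s)-\beta_{r,\ell}(K_{s-1})=-1+(s-1)/\mm$ increase in $s$, so the maximum over $2\le s\le r-1$ is attained at an endpoint, and
\begin{align*}
\beta_{r,\ell}(K_{r-1})-\beta_{r,\ell}(K_2)=(r-3)\left(\frac{r}{2\,\mm}-1\right)<0
\end{align*}
because $\mm>r/2$ and $r\ge4$. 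Alternatively, the paper's route is shorter here: for $v(J)\ge3$ the inequality $\beta_{r,\ell}(J)<\beta_{r,\ell}(K_2)$ is equivalent to $(e(J)-1)/(v(J)-2)<\mm$, and a subgraph of $K_r$ with a degree-$1$ vertex has $e(J)\le\binom{v(J)-1}{2}+1$, whence $(e(J)-1)/(v(J)-2)\le(v(J)-1)/2\le r/2<\mm$. With either patch in place, your equality analysis (every reduction step is strictly increasing, so equality forces $J\cong K_2$) goes through.
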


\begin{proof}

	First, let us prove part~\ref{beta:less0}.
	When $J \subsetneq K_{r}$ has $r$ vertices,
	we can easily see from~\eqref{def:beta} that
	$\beta_{r,\ell}(J) < 0$.
	Thus, let us assume that $2 \le v(J) < r$.
	Observe that $\beta_{r,\ell}(J) < 0$ if the following 
	inequalities are satisfied:
	\begin{align}\label{mminequalitybk2}
		\mm < m_{2}(K_{r}) \le \dfrac{\binom{r}{2}-e(J)}{r-v(J)}.
	\end{align}
	It remains to show that both inequalities 
	in~\eqref{mminequalitybk2} are true.
	The first inequality follows from~\thref{identitiesform2}.
	As $m_{2}(K_{r}) = \big ( \binom{r}{2}-1 \big )/(r-2)$
	by~\thref{identitiesform2},
	the last inequality in~\eqref{mminequalitybk2} is equivalent 
	to
	\begin{align}\label{inequalitym_2(K_r)}
		\dfrac{\binom{r}{2}-1}{r-2} \le 
		\dfrac{\binom{r}{2}-e(J)}{r-v(J)}.
	\end{align}
	If $v(J) = 2$, then $e(J) \le 1$ and 
	hence~\eqref{inequalitym_2(K_r)} 
	holds.
	If $3\le v(J)< r$, then the last inequality can be rearranged 
	to
	\begin{align*}
		\dfrac{e(J)-1}{v(J)-2} \le \dfrac{\binom{r}{2}-1}{r-2}.
	\end{align*}
	As $e(J) \le \binom{v(J)}{2}$ and 
	$\big ( \binom{r}{2}-1\big )/(r-2) = (r+1)/2$,  
	the last inequality holds whenever $v(J) \le r$.
	This establishes part~\ref{beta:less0}.

	To show part~\ref{beta:lessK2}, first note that 
	$d(v)=1$ for some $v \in V(J)$ if and only if 
	$K_2\se J \se K_{r-1}\cdot K_2$,
	where $K_{r-1}\cdot K_2$ denotes the graph obtained from 
	$K_{r-1}$ by adding a pendant edge. 
	When $J \cong K_2$, the equality in~\ref{beta:lessK2} holds 
	trivially. 
	Thus, let us assume that 
	$v(J)\ge 3$ and $J \se K_{r-1}\cdot K_2$.
	In this case,
	the inequality $\beta_{r,\ell}(J)< \beta_{r,\ell}(K_2)$ is 
	equivalent to 
	\begin{align}\label{eq:aux:J1}
		\dfrac{e(J)-1}{v(J)-2} < \mm.
	\end{align}
	But, for any $J \se K_{r-1}\cdot K_2$ such that $v(J)\ge 3$,
	we have  
	\begin{align*}
		\dfrac{e(J)-1}{v(J)-2} \le m_2(K_{r-1}\cdot K_2) 
		= \max \left \{ m_2(K_{r-1}), 
		\dfrac{e(K_{r-1}\cdot K_2)-1}{
		v(K_{r-1}\cdot K_2)-2}\right\} 
		= \dfrac{r}{2},
	\end{align*}
	by definition of $m_2(\cdot)$ and the identity 
	$m_{2}(K_{r-1})=r/2$ (see ~\thref{identitiesform2}). 
	As $m_2(K_r,C_{\ell}) > r/2$ by~\thref{identitiesform2},
	this finishes the proof of part~\ref{beta:lessK2}.

	For part~\ref{beta:K2}, the identity 
	$\beta_{r,\ell}(K_2) = 1/m_2(K_r,C_{\ell})- 
	(\ell-2)/(\ell-1)$ 
	follows readily from the definition of $\beta_{r,\ell}$ 
	in~\eqref{def:beta} and the identity for $\mm$ 
	in~\thref{identitiesform2}. 
	Finally, $m_2(K_r,C_\ell) >0$ and $(\ell-2)/(\ell - 1) <1$ 
	imply that $\beta_{r,\ell}(K_2) > -1$.

\end{proof}

Now we are ready to prove~\thref{deg-lambda}.

\begin{proof}[Proof of \thref{deg-lambda}] 

	Suppose that \hypertree(\cH) executes the if-statement in 
	lines \ref{algHStep2If}--\ref{algH:add1} in the $i$-th 
	iteration of its while loop. 
	Then, $i \in D_{i}$ and hence $i \in D_{T}$,
	by~\thref{hypertreeBasics}.
	Moreover, $G_{i} = G_{i-1} \cup K$ for some $K \cong K_{r}$ 
	such that $|V(G_{i-1})\cap V(K)|\ge 2$ and 
	$K\not\se G_{i-1}$.
	Observe that graph $J=G_{i-1}\cap K$ satisfies the 
	assumptions of \thref{claim:beta} and hence, 
	by~\eqref{eq:lambdaDiff},  
	$\lambda(G_i)-\lambda(G_{i-1})=\beta_{r,\ell}(J) < 0.$ 

	Now, suppose that \hypertree(\cH) executes the else-statement 
	in lines 
	\ref{algH:flowerPowerBegin}--\ref{algH:flowerPowerEnd} in 
	the $i$-th iteration of its while loop. 
	Let $\cH_F = \{C\}\cup \{P_e : e\in C\sm E(G_{i-1})\}$ be the 
	flower returned by \flower$(\cH_{i-1},\cH,\sigma_{i-1})$. 
	Recall all the properties of $\cH_{F}$ given by
	~\thref{lem:preludeflowerCorrectness}.
	In order to bound the difference 
	$\lambda(G_{i})-\lambda(G_{i-1})$, 
	we first analyse the increment 
	$\lambda (G_{i-1}\cup C)-\lambda(G_{i-1}).$ 
	Let $J_0$ be the graph $G_{i-1}\cap C$.
	By~\eqref{eq:lambdaDiff-0}, we have
	\begin{align} \label{dlambdaC}
		\lambda (G_{i-1}\cup C)-\lambda(G_{i-1}) &=
		\ell-v(J_0)-\dfrac{\ell-e(J_0)}{m_2(K_r,C_{\ell})} 
		\nonumber\\
		& \le \left(\dfrac{\ell-2}{\ell-1}-
		\dfrac{1}{m_2(K_r,C_{\ell})}\right) \cdot 
		|E(C)\sm E(G_{i-1})|,
	\end{align}
	where in the inequality we use that $v(J_0)\ge 2$ and 
	$v(J_0)> e(J_0)$, as $K_{2}\se J_0\subsetneq C_{\ell}$
	(see~\thref{lem:preludeflowerCorrectness}).
	Note that equality holds in~\eqref{dlambdaC} if and only if 
	$J_0\cong K_2.$

	By~\thref{claim:beta}\ref{beta:less0}, 
	the contribution of each
	petal of $\cH_{F}$ to $\lambda$ is negative.
	But, the contribution of $C$ to $\lambda$, 
	which is bounded by~\eqref{dlambdaC}, may be positive
	(and large). 
	However, as we shall show, 
	the contribution of $C$ to $\lambda$ is smaller or equal 
	than the absolute value of the sum of all the contributions 
	of each petal of $\cH_F$ to $\lambda$.
	In order to prove this,
	we recursively find a subsequence of petals 
	$(P_{j})_{j=1}^{t}$ in $\cH_{F}$ such that 
	the intersection graph 
	$P_{j}\cap (G_{i-1}\cup C \cup P_{1}\cup 
	\cdots \cup P_{j-1})$ has potentially many isolated 
	vertices.
	These isolated vertices allow us to gain a sufficiently 
	negative contribution to $\lambda$ from each petal in the 
	sequence, 
	and hence `beat' the contribution given by the cycle 
	in~\eqref{dlambdaC}.
	This sequence of petals does not necessarily contain all the 
	petals of $\cH_{F}$, but this is not a problem.
	By \thref{claim:beta}\ref{beta:less0},
	all the petals in $\cH_{F}$ give a negative contribution to
	$\lambda$, and hence we may discard some petals from the 
	analysis 
	(and adding them later will not increase the value of 
	$\lambda$).

	We define this sequence of petals iteratively. 
	Let $\{u_0,\ldots u_{\ell -1}\}$ be a cyclic ordering of the 
	vertices of $C$ such that $u_0u_{\ell-1}$ is an edge of 
	$G_{i-1}\cap C,$ which must exist by
	property \ref{intersectionwithG_i} of
	\thref{lem:preludeflowerCorrectness}.  
	By convention, define $u_{-1}=u_{\ell-1}$.
	Now, define $A_0 = E(C)\sm E(G_{i-1})$ 
	and construct a nested sequence of sets $(A_{s})_{s \ge 0}$
	in the following recursive way.
	For each $s \in \mathbb{N}$, let
	\begin{align*}
		m_s = \min\{m : u_{m}u_{m+1} \in A_{s-1}\}
	\end{align*}
	and let $P_s = P_{u_{m_{s}}u_{m_{s}+1}}$ be the unique petal
	in $\cH_{F}$ which covers the edge $u_{m_s}u_{m_s+1}$,
	meaning that  
	$P_s\cap C = \{u_{m_s}u_{m_s+1}\}$.
	Then, set 
	\begin{align*}
		A_s = A_{s-1} \sm \{u_mu_{m+1} : u_{m+1} \in V(P_s) \}.
	\end{align*}
	Let $t$ be the smallest integer such that $A_{t}=\emptyset$,
	and note that $t \le |A_{0}| = |E(C) \sm E(G_{i-1})|$.

	For simplicity, denote $G_{i-1}^{(0)} =  G_{i-1}\cup C$ and, 
	more generally, for each $1\le s \le t$, let 
	\begin{align*}
		G_{i-1}^{(s)} =  
		G_{i-1}\cup C \cup P_1 \cup\ldots \cup P_s.
	\end{align*} 
	Now, observe that 
	\begin{align}\label{conclusion1.0}
		\lambda (G_{i})-\lambda(G_{i-1}) 
		\le \lambda (G_{i-1}^{(0)})-\lambda(G_{i-1}) 
		+\sum_{s=1}^t \lambda (G_{i-1}^{(s)})
		-\lambda(G_{i-1}^{(s-1)}).
	\end{align}
	Indeed, \thref{claim:beta}\ref{beta:less0} together 
	with~\eqref{eq:lambdaDiff} imply that
	we can discard the petals in 
	$\{P_{e}: e\in E(C)\sm E(G_{i-1})\}$ which do not belong to
	the chosen sequence $P_1,\ldots,P_t$.
	Moreover, equality holds if and only if 
	$P_1,\ldots,P_t$ are all the petals in the flower
	$\cH_{F}$.
	To bound each increment in~\eqref{conclusion1.0},
	we next analyse the structure of the graph 
	$J_{s}:= G_{i-1}^{(s-1)}\cap P_s$.
	Define
	\begin{align*}
	I_s = \{u_{m+1} \in V(J_s)\sm\{u_{m_s+1}\}: 
	u_mu_{m+1} \in A_{s-1}\}.
	\end{align*}

	\begin{claim}\thlabel{claim:deg1}
		The degree of $u_{m_s+1}$ in $J_s$ is 1 and $I_s$ 
		is a set of isolated vertices in $J_s$.
	\end{claim}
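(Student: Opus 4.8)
The plan is to unpack the definitions and track where each vertex of the petal $P_s$ can land relative to the graph $G_{i-1}^{(s-1)}$ it is being glued onto. Recall that $P_s$ is a copy of $K_r$ with $P_s \cap C = \{u_{m_s}u_{m_s+1}\}$, so as a \emph{hyperedge} $P_s$ meets the cycle hyperedge $C$ in exactly one hypervertex, namely the edge $u_{m_s}u_{m_s+1}$. In the graph world this means $V(P_s) \cap V(C) = \{u_{m_s}, u_{m_s+1}\}$ — if $P_s$ shared a third vertex with $C$ it would contain a second chord of $C$ and hence (since $\ell \ge 4$) a second edge of $C$, contradicting $C \cap P_s = \{u_{m_s}u_{m_s+1}\}$. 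Similarly, property \ref{intersectionwithG_i} of \thref{lem:preludeflowerCorrectness} gives $|V(P_s) \cap V(\graph{\cH_{i-1}})| \le 1$.

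First I would handle the vertex $u_{m_s+1}$. It lies in $V(P_s)$ by construction, so its degree in $J_s = G_{i-1}^{(s-1)} \cap P_s$ equals its degree in $P_s$ towards $V(P_s) \cap V(G_{i-1}^{(s-1)})$. By the previous paragraph $V(P_s)$ meets $V(C)$ only in $\{u_{m_s}, u_{m_s+1}\}$ and meets $V(\graph{\cH_{i-1}})$ in at most one vertex; since $G_{i-1}^{(s-1)} = \graph{\cH_{i-1}} \cup C \cup P_1 \cup \dots \cup P_{s-1}$, I also need that $P_s$ meets each earlier petal $P_j$ in at most the vertex it meets $\graph{\cH_{i-1}} \cup C$ in — but more simply, $V(P_s) \cap V(G_{i-1}^{(s-1)}) \subseteq (V(P_s)\cap V(C)) \cup (V(P_s) \cap V(\graph{\cH_{i-1}}))$ need not hold directly because of the earlier petals, so I would argue instead that any vertex of $V(P_s)$ lying in an earlier petal $P_j$ but not in $\graph{\cH_{i-1}}\cup C$ would force $P_j$ and $P_s$ to share structure; the clean route is: $u_{m_s}$ is the unique vertex of $V(P_s)\cap V(\graph{\cH_{i-1}})$ allowed (it already lies in $G_{i-1}$ since $u_{m_s}u_{m_s-1}$-type edges of $C$ were removed into $\graph{\cH_{i-1}}$ along the way — actually $u_{m_s}$ lies in $G_{i-1}^{(s-1)}$ because the edge $u_{m_s}u_{m_s+1}$ was still in $A_{s-1}$, meaning its endpoints were already present). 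So in $J_s$ the vertex $u_{m_s+1}$ is adjacent only to $u_{m_s}$, giving degree exactly $1$; I would confirm the edge $u_{m_s}u_{m_s+1}$ itself lies in $P_s$ (yes, it is the shared edge) and in $G_{i-1}^{(s-1)}$ (yes, because $P_s \subseteq G_i$ and this edge is in $C \subseteq G_{i-1}^{(0)} \subseteq G_{i-1}^{(s-1)}$).

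Next I would handle $I_s$. Each $u_{m+1} \in I_s$ satisfies $u_mu_{m+1} \in A_{s-1}$ and $u_{m+1} \ne u_{m_s+1}$, and by definition of $I_s$ we only keep those $u_{m+1}$ that actually lie in $V(J_s) = V(P_s) \cap V(G_{i-1}^{(s-1)})$. I must show such a vertex is isolated in $J_s$, i.e. has no $P_s$-edge to another vertex of $V(G_{i-1}^{(s-1)})$. The key point is the definition of $A_s$: once $u_{m+1} \in V(P_s)$, the edge $u_mu_{m+1}$ is deleted from $A_{s-1}$ in forming $A_s$, so these vertices are precisely the ``new'' vertices that $P_s$ contributes along the cycle. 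I would argue that $u_{m+1} \in I_s$ cannot be adjacent in $J_s$ to $u_{m_s+1}$ (else $P_s$ contains the path $u_{m_s}u_{m_s+1}u_{m+1}$ plus, via $C$, would create a short cycle using $u_{m+1}u_{m+2}\dots$, contradicting $\ell \ge 4$ — more carefully, $u_{m+1}$ adjacent to $u_{m_s+1}$ in $P_s$ means the chord $u_{m_s+1}u_{m+1}$ of $C$ exists, and since $\ell\ge 4$ at least one of the two $C$-arcs between them has length $\ge 2$, so we get a short cycle or a contradiction with $C \cap P_s$ being a single edge). Likewise $u_{m+1}$ is not adjacent in $J_s$ to any other vertex of $V(J_s)$, because two vertices of $I_s$, say $u_{m+1}$ and $u_{m'+1}$, adjacent in $P_s$ would again yield a chord of $C$, and adjacency to $u_{m_s}$ is ruled out because $u_{m_s}u_{m+1}$ would be a chord of $C$. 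The main obstacle here is being careful that ``adjacent in $J_s$'' means adjacent in $P_s$ \emph{and} that edge lies in $G_{i-1}^{(s-1)}$; I would show that any such edge is a chord of $C$ or an edge within $\graph{\cH_{i-1}}$ incident to a vertex that $P_s$ is forbidden from touching twice, and in every case derive a contradiction with $\ell \ge 4$ or with property \ref{intersectionwithG_i} of \thref{lem:preludeflowerCorrectness}. Once both assertions are in place the claim is proved; the genuinely delicate part is the bookkeeping for $u_{m_s+1}$ and the $I_s$-vertices versus the \emph{earlier} petals $P_1,\dots,P_{s-1}$, which is where I would spend the most care.
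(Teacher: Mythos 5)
There is a genuine gap, and it begins in your first paragraph. The condition $C\cap P_s=\{u_{m_s}u_{m_s+1}\}$ is an intersection of \emph{hyperedges}: it says $P_s$ and $C$ share exactly one edge of $G$, and therefore forbids $P_s$ from containing a second \emph{edge} of $C$ --- not a second \emph{chord}. A chord of $C$ is by definition not an edge of $C$, so your inference ``a second chord and hence a second edge'' is a non sequitur, and the assertion $V(P_s)\cap V(C)=\{u_{m_s},u_{m_s+1}\}$ is false in general: for $\ell\ge 5$ the clique $P_s$ may contain any cycle vertex $u_k$ with $k\notin\{m_s-1,m_s,m_s+1,m_s+2\}$ without creating a second common edge with $C$. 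This is precisely why $I_s$ and the bookkeeping sets $A_s$ exist at all; under your premise $I_s$ would always be empty and the second half of the claim vacuous, yet you then argue about nonempty $I_s$, so the write-up is internally inconsistent. The same confusion drives your isolation argument: since $P_s$ is a clique, the chord $u_{m_s+1}u_{m+1}$ automatically \emph{is} an edge of $P_s$ whenever both endpoints lie in $V(P_s)$, and no contradiction with $\ell\ge 4$ or ``short cycles'' arises --- $G$ is not assumed to be $C_\ell$-free. The only question is whether such an edge also lies in $G_{i-1}^{(s-1)}$, and that is what has to be analysed.

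The route that works, which you gesture at but explicitly defer at the critical point, is to decompose $E(G_{i-1}^{(s-1)})=E(G_{i-1})\cup E(C)\cup E(P_1)\cup\cdots\cup E(P_{s-1})$ and rule out each possible source of an edge of $J_s$ incident to a vertex $v=u_{k+1}\in I_s\cup\{u_{m_s+1}\}$. Edges of $G_{i-1}$ are excluded by property \ref{intersectionwithG_i} of \thref{lem:preludeflowerCorrectness}, since such an edge would put two vertices of $P_s$ into $\graph{\cH_{i-1}}$. Edges of $C$ lying in $P_s$ are only $u_{m_s}u_{m_s+1}$, by the hyperedge condition, and $v\neq u_{m_s}$ by minimality of $m_s$, so this contributes degree $1$ to $u_{m_s+1}$ and degree $0$ to each vertex of $I_s$. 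Finally --- and this is exactly the step you postpone to ``where I would spend the most care'' --- edges of earlier petals are excluded by running the $A$-bookkeeping forwards: if $v=u_{k+1}$ lay in $V(P_j)$ for some $j<s$ (as it must if it is incident to an edge of $P_j$), then $u_ku_{k+1}$ would have been deleted when forming $A_j\supseteq A_{s-1}$, contradicting $u_ku_{k+1}\in A_{s-1}$, which is part of the definition of $I_s\cup\{u_{m_s+1}\}$. Without this last step the claim is not proved; with it, the chord discussion becomes unnecessary.
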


	\begin{proof}
		Let $u_{m}$ be any vertex in $I_{s}\cup \{u_{m_{s}+1}\}$
		and $w$ be any vertex in $J_{s}$.
		We affirm that $u_{m}$ is adjacent to
		$w$ inside the graph $J_{s}$ 
		if and only if $\{u_{m}w\} = C\cap P_s$. 
		Indeed, we cannot have
		$\{u_{m}w\}\in E(G_{i-1})$,
		otherwise $P_{s}$ would be an $r$-clique which intersects
		$G_{i-1}$ in at least $2$ vertices,
		contradicting~\thref{lem:preludeflowerCorrectness}
		\ref{intersectionwithG_i}.
		If $w \neq u_{m+1}$, we also cannot have 
		$\{u_{m}w\} \in E(P_{1}\cup \cdots \cup P_{s-1})$,
		otherwise $u_{m-1}u_{m} \notin A_{s-1}$, and hence
		$u_{m}\notin I_{s} \cup \{ u_{m_{s}+1}\}$.
		As $u_{m_s}u_{m_s+1}$ is the only edge in $P_s\cap C$,
		it follows that $u_{m_{s}}$ is the only neighbour of 
		$u_{m_{s}+1}$ in $J_s$, and that 
		$u_{m}$ is isolated in $J_{s}$ for any $u_{m}\in I_{s}$.
	\end{proof}

	Let $\wt J_s$ be the subgraph of $J_s$ 
	induced by the vertex set $V(J_s)\sm I_s$.
	By the previous claim, we have $E(\wt J_s) = E(J_{s})$, 
	which implies that 
	$\beta_{r,\ell}(J_s) = \beta_{r,\ell}(\wt J_s)-|I_s|$ 
	(see~\eqref{def:beta}).
	By~\eqref{eq:lambdaDiff}, we obtain 
	\begin{align}\label{change1petal}
		\lambda (G_{i-1}^{(s)})-\lambda(G_{i-1}^{(s-1)}) 
		&=\beta_{r,\ell}(J_s) 
		=\beta_{r,\ell}(\wt J_s)-|I_s|
		\le \beta_{r,\ell}(K_2) -|I_s|
	\end{align}
	for every $1\le s\le t$.
	In the last inequality
	we use \thref{claim:beta}\ref{beta:lessK2},
	as $d(u_{m_{s}+1})=1$ by \thref{claim:deg1}.
	Moreover, by \thref{claim:beta}\ref{beta:lessK2}, 
	equality holds if and only if $\wt J_s \cong K_2$.
	When $\wt J_s \cong K_2$, note that we also have
	$|I_{s}|=0$, as the only vertex $u_{m+1} \in V(J_{s})$ such 
	that $u_{m}u_{m+1}\in A_{s-1}$ is $u_{m+1}=u_{m_{s}+1}$.

	Combining~\eqref{dlambdaC},~\eqref{conclusion1.0} 
	and~\eqref{change1petal}, we have
	\begin{align}\label{conclusion1.1}
		\lambda (G_{i})-\lambda(G_{i-1}) 
		&\le \left( \dfrac{\ell-2}{\ell-1}-
		\dfrac{1}{m_2(K_r,C_{\ell})} \right) 
		\cdot |E(C)\sm E(G_{i-1})|
		+t\beta_{r,\ell}(K_2)
		-\sum_{s=1}^t |I_s|.
	\end{align}
	From the definitions of $A_{s}$ and $I_{s}$, 
	it is easy to see that
	$\sum_s (|I_s|+1) =|A_0|=|E(C)\sm E(G_{i-1})|$.
	And, by Lemma~\ref{claim:beta}\ref{beta:K2},
	we have 
	$\beta_{r,\ell}(K_2) = 
	m_2(K_r,C_{\ell})^{-1} - (\ell-2)/(\ell-1)$.
	Then,~\eqref{conclusion1.1} is equivalent to
	\begin{align}\label{conclusion2}
		\lambda (G_{i})-\lambda(G_{i-1}) \le
		(\beta_{r,\ell}(K_{2})+1)\cdot (t-|A_{0}|).
	\end{align}
	By \thref{claim:beta}, $\beta_{r,\ell}(K_2) > -1$ and,
	as we have $t \le |A_{0}|$, it follows that 
	\begin{align}\label{conclusion3}
		(\beta_{r,\ell}(K_{2})+1)\cdot (t-|A_{0}|)\le 0.
	\end{align}
	Clearly, equality in~\eqref{conclusion3} holds if and only 
	if $t = |A_{0}|$.
	We conclude that $\lambda (G_{i})-\lambda(G_{i-1}) \le 0$ in
	the case when we add the flower 
	$\{C\}\cup\{ P_e: e\in  C\sm E(G_{i-1})\}$.

	Observe that $\lambda (G_{i})-\lambda(G_{i-1}) = 0$
	if and only if we have equalities in
	~\eqref{dlambdaC}--\eqref{conclusion3}.
	This means that we must have $C \cap G_{i-1} \cong K_{2}$
	(and hence $|A_{0}|=\ell-1$),
	$t = |A_{0}|$ and 
	\begin{align*}
		 P_s\cap (G_{i-1}\cup C\cup P_1 \cup \cdots \cup P_{s-1}) 
		 \cong K_2,
	\end{align*}
	for each $1 \le s \le t$.
	As $e \in E(P_{e}\cap C)$, we infer that
	none of the $\ell-1$ petals intersect outside the cycle $C$
	and that the only petals sharing a vertex are consecutive 
	petals,
	which share exactly one vertex.
	This happens if and only if 
	$|V(G_i)\sm V(G_{i-1})| = (r-1)(\ell -1) -1$,
	whence $i$ is {\em not} added to $D_i$
	(cf. line~\ref{algH:perfectflower} of \hypertree), and then 
	$i \notin D_T$. 
	This proves~\ref{item:1@deg-lambda}. 
	The existence of $\delta = \delta(r,\ell)$ 
	for~\ref{item:2@deg-lambda} readily follows by noting that 
	there are only $C=C(r,\ell)$ non-isomorphic configurations 
	of such flowers and cliques (and how they intersect with 
	$G_{i-1}$). 
	This finishes the proof of the lemma.
\end{proof}

It remains to prove \thref{lem:boundOut}, 
which bounds the number of non-isomorphic underlying graphs of
hypergraphs that \hypertree\ may output. 
In principle, $|\Out_{r,\ell}(n)|$ could be very large, 
but this is avoided with the help of 
the canonical labelling function 
(recall its definition given before the description of 
\hypertree).
For each $t=1,\ldots,\lceil\log n \rceil$,
these vertex labellings assist the construction of 
$\graph{\cH_{t}}$ from $\graph{\cH_{t-1}}$ and,
for all but a constant number of steps,
we will see that this construction is unique.
That is, it does not depend on the input, it depends only on 
$\graph{\cH_{t-1}}$ and $\sigma_{t-1}$.
This is the reason why we get at most a polylogarithmic bound on 
the number of outputs.

In order to bound $|\Out_{r,\ell}(n)|$,
we first bound how many pairs $(\graph{\cH_t},\sigma_t)$ 
\hypertree\ can produce in step $t$,
for all $t=1,\ldots,\lceil\log n \rceil$.
To do so, we need to recall and define some notation.
For a $\star$-critical hypergraph $\cH$, 
let $T(\cH)$ be the stopping time of \hypertree$(\cH)$.
For any $t\ge 0$ and any $\star$-critical hypergraph $\cH$ 
such that $T(\cH)\ge t$, let $\cH_{t}(\cH)$ be the hypergraph 
obtained in step $t$ of \hypertree$(\cH)$.
Let $D_t(\cH)$ be the accompanying set and 
$\sigma_t(\cH): V(\cH_{t})\to \mathbb{N}$ be the
canonical vertex labelling of $\cH_{t}$, 
and hence of $E(\graph{\cH_{t}})$. 
For each $n \in \mathbb{N}$, define $\Crit_{r,\ell}(n) = \bigcup_{v(G)=n} \Crit_{r,\ell}(G)$.
Finally, for each $t,n \in \mathbb{N}$ and each set 
$D\subseteq \{1,\ldots,t\}$, define 
\begin{align*}
	\cG(t,D,n) = \bigcup \left\{ (\graph{\cH_t},\sigma_t) : 
	\cH_t = \cH_t(\cH) ,\sigma_t = \sigma_t(\cH)\right\},
\end{align*} 
where the union is over all 
$\cH \in \Crit_{r,\ell}(n)$ such that $D_{t}(\cH)=D$  and  $T(\cH) \ge t$.

Our next lemma gives an upper bound on the size of 
$\mathcal{G}(t,D,n)$.

{\setstretch{1.12}
\begin{lemma}\thlabel{Fdk}
	For all $r,\ell \ge 4$ there exists $C >0$ such that 
	$|\cG(t,D,n)| \leq t^{C|D|}$, for all 
	$ t,n \in \mathbb{N}$ and $D \in \{1,\ldots,t\}$. 
\end{lemma}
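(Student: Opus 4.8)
\textbf{Proof plan for \thref{Fdk}.}
The plan is to bound, step by step, how many choices \hypertree\ has when it extends $(\graph{\cH_{t-1}},\sigma_{t-1})$ to $(\graph{\cH_{t}},\sigma_{t})$, and to observe that this number of choices is bounded by a constant in every non-degenerate step and by a polynomial in $t$ in every degenerate step. Since there are only $|D|$ degenerate steps, multiplying the per-step bounds over $t=1,\ldots,t$ gives $t^{C|D|}$, as claimed. The base case ($t=0$) contributes a single choice up to isomorphism, since $\cH_{0}$ is a labelled copy of $K_r$ with the fixed labelling $\sigma_{0}$.

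First I would handle a non-degenerate step $i\notin D$. By \thref{deg-lambda}\ref{item:1@deg-lambda} (and the analysis in its proof), such a step is exactly a \flower\ step in which the flower is a ``perfect'' flower: $C\cap G_{i-1}\cong K_2$, all $\ell-1$ petals are added, and consecutive petals share exactly one vertex while non-consecutive petals are disjoint outside $C$, so that $|V(G_i)\sm V(G_{i-1})|=(r-1)(\ell-1)-1$. The key point is that the seed edge $e_0$ of \flower\ is \emph{uniquely determined} by $\cH_{i-1}$ and $\sigma_{i-1}$ (\thref{existenceofe0}), and once the combinatorial type of the flower is fixed (and there is only one type in a non-degenerate step), the canonical labelling function $\sigma_i$ forces the way the new vertices and edges attach: by property (3) of the canonical extension, $\sigma_i$ outside $E(G_{i-1})$ is determined by the unlabelled graph on the new edges together with which vertices of $G_{i-1}$ they meet. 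Hence, up to isomorphism, there is a bounded number (in fact, bounded by a constant $C_1=C_1(r,\ell)$) of possibilities for $(\graph{\cH_i},\sigma_i)$ given $(\graph{\cH_{i-1}},\sigma_{i-1})$; one should be able to argue it is essentially unique, but a constant bound suffices.

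Next I would handle a degenerate step $i\in D$. Here either (Case 1) \hypertree\ adds a single $r$-clique $E\in\cE_1$ with $|V(E)\cap V(G_{i-1})|\ge 2$ and $E\not\se G_{i-1}$, or (Case 2) it adds an imperfect flower. In Case 1, the clique is determined by choosing which $\ge 2$ vertices of it already lie in $G_{i-1}$ (a choice among subsets of $V(G_{i-1})$, hence at most $v(G_{i-1})^{r}\le (O(t))^{r}$ possibilities, since $v(G_i)$ grows by at most a constant per step and there are at most $t$ steps) and how the at most $r$ new vertices attach — the latter being fixed by $\sigma_i$ once the combinatorial type is chosen, of which there are $O_r(1)$. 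In Case 2, the seed $e_0$ is again unique, but the flower may attach to $G_{i-1}$ in more than one place: we must choose which vertices of $C$ and of each petal $P_e$ land in $G_{i-1}$, again a choice among subsets of $V(G_{i-1})$ of bounded size, so at most $v(G_{i-1})^{(r-1)(\ell-1)}\le (O(t))^{(r-1)(\ell-1)}$ possibilities, times $O_{r,\ell}(1)$ for the combinatorial type, with $\sigma_i$ fixing the rest. In all cases the number of choices in a degenerate step is at most $t^{C_2}$ for a constant $C_2=C_2(r,\ell)$.

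Finally I would assemble the bound: given $D\subseteq\{1,\ldots,t\}$, run over the $t$ steps, multiply the per-step choice counts, getting at most $C_1^{\,t-|D|}\cdot (t^{C_2})^{|D|}$. The first factor is problematic as stated, so the cleaner route is to note that in a non-degenerate step the pair $(\graph{\cH_i},\sigma_i)$ is in fact \emph{uniquely} determined by $(\graph{\cH_{i-1}},\sigma_{i-1})$ — this is the content of the remark before the lemma that ``for all but a constant number of steps this construction is unique'' — so $C_1$ can be taken to be $1$, and the total is at most $t^{C_2|D|}$; setting $C=C_2$ finishes the proof. The main obstacle is precisely establishing that non-degenerate steps are deterministic: one has to check carefully that (i) the seed edge $e_0$ is forced (\thref{existenceofe0}), (ii) in a non-degenerate step the flower is forced to be the unique perfect flower rooted at $e_0$ with $C\cap G_{i-1}$ the edge $e_0$ and petals attaching only along $C$, and (iii) the canonical labelling then pins down the isomorphism type of the extension with no residual freedom; the bookkeeping with the sets $\mathscr{C}(i,\cH_i)$ and $\overline{\cH}_{i}$ is what makes (i)--(ii) go through uniformly over all inputs producing the same $(\graph{\cH_{i-1}},\sigma_{i-1})$.
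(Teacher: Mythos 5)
Your proposal follows essentially the same route as the paper: a step-by-step recursion in which a non-degenerate step determines $(\graph{\cH_t},\sigma_t)$ uniquely from $(\graph{\cH_{t-1}},\sigma_{t-1})$ (via the forced seed edge $e_0$, the unique perfect-flower type, and the canonical labelling), while a degenerate step contributes at most $t^{O_{r,\ell}(1)}$ choices for how the added clique or flower meets $\graph{\cH_{t-1}}$, and the product over steps gives $t^{C|D|}$. The ingredients you flag as needing verification (uniqueness of $e_0$, the role of $\overline{\cH}_{i}$, and the determinism of the canonical extension) are exactly the ones the paper uses, so the plan is sound.
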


\begin{proof}
	To simplify notation, set $g(t,D,n):=|\mathcal{G}(t,D,n)|$.
	First, note that $\cG(0,\emptyset,n)$ contains only one pair, 
	and hence $g(0, \emptyset,n)=1$.
	In fact, for every $\star$-critical hypergraph $\cH$, 
	$\cH_0(\cH)$ consists of a single hyperedge of type $1$ 
	(cf.~\thref{hypertreeBasics}\ref{hypertreeBasics-E0}) and 
	$\sigma_0$ is a fixed labelling of $V(\cH_0)$.

	Now, we claim that for each $t\ge 1$ and each 
	$D \subseteq \{1,\ldots,t\}$, we have
	\begin{align}\label{boundg(t,D)}
		g(t,D,n)\le 
		\begin{cases} 
		g(t-1,D,n) &\text{if } t \notin D; \\
		g\big( t-1,D\setminus \{t\},n \big)\cdot 
		(4t\ell r^2)^{\ell r^2} &\text{if } t \in D.
		\end{cases}
	\end{align}

	First, assume that $t\not\in D$ and let 
	$(\graph{\cH_{t}},\sigma_{t})\in \cG(t,D,n).$ 
	Let $\cH$ be any hypergraph in 
	$\Crit_{r,\ell}(n)$ 
	such that $\cH_{t}(\cH)= \cH_{t}$, 
	$\sigma_{t}(\cH)=\sigma_{t}$ and
	$D_{t}(\cH)=D$.
	Note that $t\not\in D$ implies that 
	$D_{t}(\cH)=D_{t-1}(\cH)$.
	And this happens if and only if 
	$\cH_{t}=\cH_{t-1}\cup \cH_{F}$,
	for some flower $\cH_{F}$ such that
	\begin{align}\label{perfectflower}
		|V(\graph{\cH_{t}})\sm V(\graph{\cH_{t-1}})|=
		(r-1)(\ell-1)-1,
	\end{align}
	see~line~\ref{algH:perfectflower} of \hypertree.  
	For~\eqref{perfectflower} to hold, observe that
	$G(\cH_{F})$ must intersect $\graph{\cH_{t-1}}$ in exactly
	one edge, which is given by line~\ref{algF:seed} of 
	$\flower(\cH_{t-1},\cH,\sigma_{t-1})$. 
	Once we have this edge, called $e_{0}$, we can see 
	that \flower\ generates only one type of flower $\cH_{F}$ 
	such that~\eqref{perfectflower} holds and 
	$\graph{\cH_{F}}\cap \graph{\cH_{t-1}}$ 
	is equal to $e_{0}$.
	Moreover, by \thref{existenceofe0}, 
	$e_0$ only depends on 
	$\graph{\cH_{t-1}}$ and the canonical labelling 
	$\sigma_{t-1}$.
	Therefore, for any other hypergraph $\cH'$
	such that $\graph{\cH_{t-1}(\cH')} = \graph{\cH_{t-1}(\cH)}$,
	$\sigma_{t-1}(\cH')=\sigma_{t-1}$
	and $t \notin D_{t}(\cH')$,
	the flower $\cH_{F}'$ given by 
	$\flower(\cH_{t-1},\cH',\sigma_{t-1})$
	also satisfies $V(\cH') \cap V(\cH_{t-1})=\{e_{0}\}$
	for the same $e_{0}$.
	As $\sigma_t$ is the canonical labelling 
	extending $\sigma_{t-1}$ to $\cH_t$, 
	which is uniquely determined by 
	$\graph{\cH_{t-1}}$
	and the unlabelled graph $\graph{\cH_{F}}$,
	we also have $\sigma_{t}(\cH') = \sigma_{t}$.
	This implies that there is an injection 
	$\cG(t,D,n) \to \cG(t-1,D,n)$ mapping 
	$(\cH_{t},\sigma_{t})$ to $(\cH_{t-1},\sigma_{t-1})$,
	and hence $g(t,D,n)\le g(t-1,D,n)$. 
	This proves the first inequality in~\eqref{boundg(t,D)}.

	To show the second inequality, note that in step $t$ of 
	\hypertree(\cH) one of the following holds:
	(1) The algorithm has stopped; 
	(2) $\cH_{t}= \cH_{t-1}\cup \{E\}$, 
	for some $E \in \cE_{1}(\cH)$; or
	(3) $\cH_{t}= \cH_{t-1}\cup \{\cH_{F}\}$, 
	for some flower $\cH_{F} \se \cH$.
	Let $H = \graph{E}$ or $H = \graph{\cH_{F}}$ be the
	underlying graph of the hyperedges that are added.
	Note that $v(H) \le \ell r^{2}$ and,
	as the number of vertices in $\graph{\cH_{t}}$ is at most 
	$t\ell r^{2}$, there are at most 
	$(t\ell r^{2})^{\ell r^{2}}\cdot 2^{\ell r^{2}}$ ways to 
	choose the subgraph $H \cap \graph{\cH_{t-1}}$.
	Once this subgraph is fixed, there are at most 
	$2^{\ell r^{2}}$
	ways to choose the edges of $H \setminus \graph{\cH_{t-1}}$
	in $\graph{\cH_{t}}$.
	This implies that the graph $\graph{\mathcal{H}_{t}}$ may be 
	obtained from $\graph{\mathcal{H}_{t-1}}$ by at most 
	$(4t\ell r^2)^{\ell r^2}$ ways, and hence
	$g(t,D)\le g(t-1,D)\cdot (4t\ell r^2)^{\ell r^2}$.
	As $g(0,\emptyset,n)= 1$, we establish our lemma
	with $C = (\ell r^{2})^{4}$ by iterating the inequalities
	in~\eqref{boundg(t,D)}. 
\end{proof}

Now, we are ready to prove Lemma~\ref{lem:boundOut}:

\begin{proof}[Proof of~\thref{lem:boundOut}]
	We first claim that there exists a constant 
	$C_{1}=C_{1}(r,\ell)>0$ 
	such that $|D_{T}(\cH)| \le C_{1}$ for all 
	$\cH \in \Crit_{r,\ell}(n)$.
	Recall that $T = T(\cH)$ denotes the stopping time of 
	\hypertree$(\cH)$.
	Fix any hypergraph $\cH$ in $\Crit_{r,\ell}(n)$ and let 
	$G_{i}=\graph{\cH_{i}}$ for $i=0,\ldots,T$.
	By~\thref{deg-lambda}, we have
	$\lambda (G_{i}) \leq \lambda (G_{i-1}) - \delta$ if 
	$i \in D_{T}(\cH)$,  
	and $\lambda (G_{i}) = \lambda (G_{i-1})$ if 
	$i \not\in D_{T}(\cH)$, where $\delta = \delta(r,\ell)>0$.
	As $\lambda(G_0) = \lambda(K_r)$ 
	(by \thref{hypertreeBasics}\ref{hypertreeBasics-E0})
	and $\lambda(G_{T(\cH)-1}) > -\eps$ 
	(by~\thref{hypertreeBasics}\ref{hypertreeBasics-stopping}), 
	it follows that 
	$|D_{T}(\cH)|\le 1+(\lambda(K_{r})-\eps)/\delta$.
	As $\eps$ only depends on $r$ and $\ell$, 
	this proves our claim.

	By \thref{hypertreeBasics}\ref{hypertreeBasics-stopping}, 
	the stopping time $T$ is bounded from above by $\log n$.
	Since $|D_{T}| \le C_{1}$, the size of $\Out_{r,\ell}(n)$ 
	is bounded by the size of
	\begin{align*}
		\bigcup_{t \le \log n} \bigcup_{\substack{D \se [t]:\\ 
		|D|\le C_{1}}} \cG(t,D,n),
	\end{align*}
	where $\cG(t,D,n)$ was defined just above \thref{Fdk}.
	Using the bound on $|\cG(t,D,n)|$ given by~\thref{Fdk}, 
	we conclude that
	\begin{align*}
		|\Out_{r,\ell}(n)|\le 
		\sum \limits_{t=1}^{\lceil \log n \rceil} 
		\sum \limits_{\substack{D\se [t]:\\|D|\le C_{1}}} 
		t^{M_1 |D|}\le(\log n)^{C_{0}},
	\end{align*}
	for some $C_{0}=C_{0}(r,\ell)>0$.
\end{proof}
}

\bigskip

\noindent
{\bf Acknowledgments.}
This work was started at the thematic program GRAPHS@IMPA (January--March 2018), in Rio de Janeiro.
We thank IMPA and the organisers for the hospitality and for providing a pleasant research environment. 
We would also like to thank Rob Morris for helpful discussions.

\bibliographystyle{siam}

\appendix 

\section{Proof of Lemma~\ref{identitiesform2}}\label{appendix}

	Note that every subgraph $J \subsetneq C_{\ell}$ 
	is a forest, and so we have $e(J)\le v(J)-1$. 
	Thus, for every $J \subsetneq C_{\ell}$ with $v(J) \ge 3$ 
	this implies that 
	$(e(J)-1)/(v(J)-2) \le 1.$ 
	On the other hand, 
	\begin{align*}
		\dfrac{e(C_{\ell})-1}{v(C_{\ell})-2} = 
		\dfrac{\ell -1}{\ell-2} >
		1,
	\end{align*}
	which implies $m_2(C_{\ell}) = (\ell-1)/(\ell-2)$. 
	Now, let us analyse subgraphs of $K_{r}$.
	For each $J \se K_{r}$, we have 
	$e(J) \le \binom{v(J)}{2}$.
	Thus,
	\begin{align*}
		\dfrac{e(J)-1}{v(J)-2} \le \dfrac{\binom{v(J)}{2}-1}{v(J)-2}
		= \dfrac{v(J)+1}{2}
	\end{align*}
	for each $J \subseteq K_{r}$ such that $v(J) \ge 3$.
	It follows that $m_2(K_r) = (r+1)/2$.
	Next, for each $\ell \ge 3$, consider the function 
	$f_{\ell}: \mathbb{N} \to \mathbb{Q}$ defined by 
	$$f_{\ell}(t)=\frac{\binom{t}{2}}{t-2+m_{2}(C_{\ell})^{-1}}.$$
	It is not hard to check that $(f_{\ell}(t))_{t\ge3}$ is monotone 
	increasing (for every given $\ell$). 
	Since $m_2(C_{\ell}) = (\ell-1)(\ell-2)$, we have
	\begin{align}\label{eq:m2-f}
		m_2(K_r,C_{\ell}) = 
		f_{\ell}(r) = 
		\dfrac{\binom{r}{2}}{r-2+(\ell-2)/(\ell-1)}.
	\end{align}
	It follows readily from this identity that 
	$m_2(K_r,C_{\ell})$ is strictly decreasing in $\ell$, and thus, 
	\begin{align}\label{eq:m2-inequality}
		m_2(K_r,C_{\ell}) \le m_2(K_r,C_3) = \frac{r(r-1)}{2r-3} < 
		\frac{r+1}{2} = m_2(K_r)
	\end{align}
	for every $r \ge 4$.
	Finally, 
	the identity in~\eqref{eq:m2-f} implies that 
	\begin{align*}
		\mm &= \dfrac{\binom{r}{2}(\ell-1)}{(r-1)(\ell-1)-1} 
		= \dfrac{r}{2} \cdot \dfrac{1}{1-\frac{1}{(r-1)(\ell -1)}}
		> \dfrac{r}{2}. \qedhere
	\end{align*}

\end{document}